\newtheorem{Theorem}{\bf Theorem}
\newtheorem{lemma}[Theorem]{\bf Lemma}
\newtheorem{proposition}[Theorem]{\bf Proposition}
\newtheorem{corollary}[Theorem]{\bf Corollary}
\newtheorem{definition}[Theorem]{\bf Definition}
\newtheorem{example}[Theorem]{\bf Example}
\newtheorem{theorem}[Theorem]{\bf Theorem}
\def\qed{\hfill$\Box$}
\def\scfig #1 #2 {\resizebox{#2}{!}{\includegraphics{#1}}}
\newcommand{\be}{\begin{equation}}
\newcommand{\ee}{\end{equation}}
\newcommand{\fkm}{\mathfrak m}
\numberwithin{equation}{section}
\begin{document}

\title[Indecomposable integrally closed modules]{Indecomposable integrally closed modules of rank 3 over two-dimensional regular local rings}

\author{Futoshi Hayasaka}
\address{Department of Environmental and Mathematical Sciences, Okayama University, 3-1-1
Tsushimanaka, Kita-ku, Okayama, 700-8530, JAPAN}
\author{Vijay Kodiyalam}
\address{The Institute of Mathematical Sciences, Chennai, India and Homi Bhabha National Institute, Mumbai, India}
\email{hayasaka@okayama-u.ac.jp,vijay@imsc.res.in}

\begin{abstract}
We characterise ideals in two-dimensional regular local rings that arise as ideals of maximal minors of indecomposable integrally closed modules of rank three.
\end{abstract}

\subjclass[2020]{Primary 13B22,13H05}
\keywords{integral closure, indecomposable module, two-dimensional regular local ring, determinantal criterion}

\maketitle

\section{Introduction}

The theory of integrally closed ideals in two-dimensional regular local rings was developed by Zariski in \cite{Zrs1938, ZrsSml1960}. The fundamental structural results of this theory are the 
product and unique factorisation theorems which assert that the product of any two integrally closed ideals is again integrally closed, and that any non-zero integrally closed ideal can be expressed uniquely (except for ordering) as a product of simple integrally closed ideals. 
Since then, this theory has been studied and extended by several authors in different directions. In \cite{Kdy1995}, 
the second named author initiated the study of integrally closed modules 
over two-dimensional regular local rings, 
and generalised the classic theory in this direction. 
The non-triviality of this generalisation relies on the 
existence of indecomposable integrally closed modules of rank at least two. 
In the paper \cite{Kdy1995}, it was proved that 
there exist indecomposable integrally closed modules of arbitrary rank 
by showing that one such module can be constructed from any given simple integrally closed ideal. 
In \cite{Hys2020, Hys2022}, the first named author constructed 
indecomposable integrally closed modules of arbitrary rank from given ideals in a large class of integrally closed monomial ideals which are not necessarily simple.
Thus we have a large class of indecomposable integrally closed modules whose ideals of maximal minors 
are simple integrally closed ideals or non-simple integrally closed monomial ideals.

In this paper we characterise ideals of two-dimensional regular local rings that arise as the ideal of maximal minors of a matrix whose columns generate an indecomposable integrally closed module of rank three. This is a continuation of the work done in \cite{HysKdy2022} which answered the same question in the rank two case, which itself was motivated by the examples of monomial ideals in \cite{Hys2020} and \cite{Hys2022}. While it is natural to ask for a similar characterisation for all ranks, the cases of rank two and three are somewhat special since any decomposable module of these ranks has an ideal as direct summand. We hope to deal with the higher rank case in a future publication.

We will primarily do two things. First, in Section \ref{exissec}, 
we establish a general ``existence result" for arbitrary rank $r$. This result is of the type that asserts that ideals satisfying certain conditions occur as the ideal of maximal minors of indecomposable integrally closed modules of rank $r$. The result depends on the construction of a specific module which is introduced in Section  \ref{prelimsec}.
Secondly, we will prove some ``non-existence results" in rank three in Section \ref{nonexissec}. 
These assert that certain ideals cannot be the ideal of maximal minors of an indecomposable integrally closed module of rank three. Putting these together gives us the desired characterisation - Theorem \ref{nonexistence} - of ideals that arise as the ideal of maximal minors of an indecomposable integrally closed module of rank three. 
On the way to proving these results, we identify a class of integrally closed modules that we call extremal modules and study some of their properties in Section \ref{exissec}. We believe that this class of modules merits further study.

\section{Preliminary results}\label{prelimsec}

Throughout this note $(R,{\mathfrak m})$ will be a two-dimensional regular local ring with infinite residue field and $M$ will be a finitely generated torsion-free $R$-module. We will assume familiarity with the basic results on finitely generated integrally closed $R$-modules from \cite{Kdy1995} where a related question on the ideals of minors of indecomposable modules was first raised. We will denote $M^{**}$ by $F$ - which is a free $R$-module - and always regard $M$ as a submodule of $F$ generated by the columns of a suitable matrix. The ideal of maximal minors of this matrix will be denoted by $I(M)$ and is an ${\mathfrak m}$-primary ideal of $R$ (or the whole of $R$ if $M$ is itself free). By $ord(M),rk(M)$ and $\mu(M)$ we mean the order of the ideal $I(M)$, the rank of $M$ and the minimal number of generators of $M$ respectively.

Before proceeding, we recall the determinantal criterion of integral dependence which is one of the mainstays of this paper. With $M \subseteq F$ as before, an element $v \in F$ (thought of as a column vector) is in the integral closure of the module $M$ iff the determinant of any matrix with one column $v$ and the rest of the columns in $M$ is integral over $I(M)$. This is a special case of a result of Rees in \cite{Res1987}.

When $M$ has no free direct summands,
it is contained in ${\mathfrak m}F$. 
Indeed, if $M \nsubseteq \mathfrak m F$, then $M$ contains a minimal generator  $v$ of $F$, which implies that $Rv$ is a direct summand of $M$. 
So, if $M$ is indecomposable of $rk(M)=r \geq 2$, 
then $I(M) \subseteq {\mathfrak m}^r$ and so $ord(I(M)) \geq r$.
A direct corollary of the determinantal criterion is the following generalisation of Lemma 1 of \cite{HysKdy2022}, which shows that converse is certainly false.

\begin{lemma}\label{mtother} 
Let $r \geq 2$. Then
the ideal ${\mathfrak m}^r$ is not $I(M)$ for any indecomposable integrally closed $R$-module $M$ of rank $r$.
\end{lemma}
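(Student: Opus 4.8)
The plan is to argue by contradiction: suppose $M$ is an indecomposable integrally closed module of rank $r \ge 2$ with $I(M) = \mathfrak{m}^r$. Since $M$ has no free direct summand, $M \subseteq \mathfrak{m}F$ where $F = R^r$. The key is to exploit the determinantal criterion together with the fact that $\mathfrak{m}^r$ is about as small as $I(M)$ can possibly be for a rank-$r$ module with no free summand — this forces $M$ to be very close to $\mathfrak{m}F$ itself, and $\mathfrak{m}F = \mathfrak{m} \oplus \cdots \oplus \mathfrak{m}$ is visibly decomposable.

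First I would show $M \supseteq \mathfrak{m}F$, hence $M = \mathfrak{m}F$. To see this, fix a basis $e_1, \dots, e_r$ of $F$ and a regular system of parameters $x, y$ of $R$. Take any generator of $\mathfrak{m}F$, say $x e_1$ (and by symmetry $y e_1$, and likewise for the other $e_i$). I want to apply the determinantal criterion to the vector $v = x e_1$: I must check that for every matrix $A$ whose first column is $v$ and whose remaining $r-1$ columns lie in $M$, the determinant $\det(A)$ is integral over $I(M) = \mathfrak{m}^r$, i.e. lies in $\overline{\mathfrak{m}^r} = \mathfrak{m}^r$. Since the $r-1$ columns from $M$ lie in $\mathfrak{m}F$, expanding $\det(A)$ along the first column shows each term is $x$ times an $(r-1) \times (r-1)$ minor of a matrix with entries in $\mathfrak{m}$, so $\det(A) \in x \cdot \mathfrak{m}^{r-1} \subseteq \mathfrak{m}^r$. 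Thus $x e_1 \in \overline{M} = M$, and similarly all generators of $\mathfrak{m}F$ lie in $M$, giving $\mathfrak{m}F \subseteq M$. Combined with $M \subseteq \mathfrak{m}F$ this yields $M = \mathfrak{m}F \cong \bigoplus_{i=1}^r \mathfrak{m}$, which is decomposable for $r \ge 2$, contradicting indecomposability.

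I expect the main (minor) obstacle to be the careful bookkeeping in the determinantal criterion: one must verify the integral-dependence condition for \emph{all} completing matrices $A$, not just one, and remember that $\overline{\mathfrak{m}^r} = \mathfrak{m}^r$ since $\mathfrak{m}^r$ is integrally closed (powers of the maximal ideal in a regular local ring are integrally closed, or one can cite that $\mathfrak{m}$ is a simple integrally closed ideal and invoke the product theorem). One should also note that if $M$ happens to be free the hypothesis $I(M) = \mathfrak{m}^r$ already fails since $I(M) = R$, so the assumption that $I(M) = \mathfrak{m}^r$ forces $M$ to be non-free, justifying the reduction to the case $M \subseteq \mathfrak{m}F$ at the outset. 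The argument is essentially the one in Lemma 1 of \cite{HysKdy2022} for $r = 2$, and the only new point is checking that the cofactor expansion still lands in $\mathfrak{m}^r$ in general rank, which is immediate.
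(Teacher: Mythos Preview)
Your proposal is correct and is essentially the same argument as the paper's: both show that the determinantal criterion forces $\mathfrak{m}F \subseteq M$, whence $M = \mathfrak{m}F = \mathfrak{m}^{\oplus r}$ is decomposable. You spell out the cofactor expansion and the integral closedness of $\mathfrak{m}^r$ in a bit more detail, but the structure and key idea are identical.
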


\begin{proof}
Suppose $M$ is integrally closed of rank $r$ without free direct summands and that $I(M) = {\mathfrak m}^r$. With $F = M^{**}$, as usual, $F \cong R^r$ and $M \subseteq {\mathfrak m}F$. Since all entries of a matrix for $M$ lie in ${\mathfrak m}$, the determinantal criterion shows that for any $x \in{\mathfrak m}$, the vectors
$xe_i$ are integral over $M$ and hence are in $M$. Here $e_i$ denotes 
the vector whose $i$th entry is $1$ and the others are $0$. 
Thus $M \supseteq {\mathfrak m}F$ as well and so 
$M = {\mathfrak m}F = {\mathfrak m}^{\oplus r}$ and therefore decomposable.
\end{proof}

The existence results depend on the construction of a specific module of rank $r$ associated with an ${\mathfrak m}$-primary ideal $I$ with $ord(I) = m \geq r$ and $\mu(I) = n+1$.
Before defining this module, we recall the fact that for any $\mathfrak m$-primary ideal $I$, the inequality $\mu(I) \leq ord(I)+1$ always holds, and 
the equality $\mu(I)=ord(I)+1$ holds if $I$ is integrally closed (or even, contracted) - see Chapter 14 of \cite{HnkSwn2006}. 
Thus $n \leq m$.

Write $I = (a_1,a_2,\cdots,a_{n+1})$ for a minimal generating set and for
each $j \in \{1,\cdots,n+1\}$, let $a_j = a_{1,j}x^{r-1} + a_{2,j}x^{r-2}y + \cdots + a_{r,j}y^{r-1}$ where ${\mathfrak m}=(x,y)$. All $a_{i,j} \in {\mathfrak m}$ necessarily since $I \subseteq {\mathfrak m}^r$.
Let $M_r(I)$ denote the module generated by the columns of the $r \times (n+r)$ matrix
\begin{equation}\label{spmod}
\left[
\begin{array}{ccccrrrr}
a_{1,1} & a_{1,2}  & \cdots & a_{1,n+1}  & y & 0 &   0 & 0\\
a_{2,1} & a_{2,2}  & \cdots & a_{2,n+1}  & -x & y &   0 & 0\\
\vdots & \vdots &  & \vdots  & \vdots & \ddots  & \ddots & \vdots\\
a_{r-1,1} & a_{r-1,2} & \cdots & a_{r-1,n+1}  & 0 & 0   & -x & y\\
a_{r,1} & a_{r,2} & \cdots & a_{r,n+1} & 0 & 0   & 0 & -x
\end{array}
\right].
\tag{$\ast$}
\end{equation}
Note that the notation $M_r(I)$ is justified. For first, it does not depend on the specific way of representing the generators of $I$ in terms of the $x^{r-1},\cdots,y^{r-1}$ since any two such representations of a generator differ by a linear combination of the last $r-1$ columns as these span all relations among the $x^{r-1},\cdots,y^{r-1}$. Secondly, it does not depend on the minimal generating for $I$ chosen, since changing to a different one is equivalent to performing column operations on the first $n+1$ columns. Thirdly, up to isomorphism, it does not depend on the specific chosen generating set $x,y$ of ${\mathfrak m}$ as follows easily from Lemma \ref{indep} below. The notation $Sym^k(Q)$ for a $2 \times 2$ matrix $Q$ stands for the following $(k+1)\times(k+1)$ matrix. Consider the linear forms $X,Y$ in two variables $U,V$ determined by the columns of  $Q$ as $X=aU+bV$, $Y=cU+dV$ where
$$
Q =
\left[
\begin{array}{cc}
a & c\\
b & d
\end{array}
\right].
$$
The columns of $Sym^{k}(Q)$ are then the coefficients of $U^{k},\cdots,V^{k}$ in $X^{k},\cdots,Y^{k}$. 
Equivalently, $Sym^{k}(Q)$ is the unique matrix that satisfies
$$
\left[
\begin{array}{cccc}
X^{k} & X^{k-1}Y & \cdots & Y^k
\end{array}
\right]
=
\left[
\begin{array}{cccc}
U^{k} & U^{k-1}V & \cdots & V^k
\end{array}
\right]
Sym^k(Q)
$$

\begin{lemma}\label{indep} Suppose that ${\mathfrak m} = (u,v)$ where,
$$
\left[
\begin{array}{cc}
x & y
\end{array}
\right]
=
\left[
\begin{array}{cc}
u & v
\end{array}
\right] Q
=
\left[
\begin{array}{cc}
u & v
\end{array}
\right]
\left[
\begin{array}{cc}
a & c\\
b & d
\end{array}
\right].
$$
for a matrix $Q \in GL_2(R)$.
Fix a minimal generating set $(a_1,a_2,\cdots,a_{n+1})$ of $I$ and consider the matrix 
$($\ref{spmod}$)$ above whose first $n+1$ columns express the $a_i$ in terms of $x^{r-1},\cdots,y^{r-1}$ and  whose last $r-1$ columns are the relations between $x^{r-1},\cdots,y^{r-1}$. Then, 
\begin{enumerate}
\item The columns of
$$
Sym^{r-1}(Q)
\left[
\begin{array}{ccccrrrr}
a_{1,1} & a_{1,2}  & \cdots & a_{1,n+1}\\
a_{2,1} & a_{2,2}  & \cdots & a_{2,n+1}\\
\vdots & \vdots &  & \vdots \\
a_{r-1,1} & a_{r-1,2} & \cdots & a_{r-1,n+1}\\
a_{r,1} & a_{r,2} & \cdots & a_{r,n+1}
\end{array}
\right]
$$ 
express the $a_i$ in terms of $u^{r-1},\cdots,v^{r-1}$, and
\item 
$$
Sym^{r-1}(Q)
\left[
\begin{array}{ccccrrrr}
 y & 0 &   0 & 0\\
 -x & y &   0 & 0\\
  \vdots & \ddots  & \ddots & \vdots\\
0 & 0   & -x & y\\
 0 & 0   & 0 & -x
\end{array}
\right]
=
det(Q)
\left[
\begin{array}{ccccrrrr}
 v & 0 &   0 & 0\\
 -u & v &   0 & 0\\
  \vdots & \ddots  & \ddots & \vdots\\
0 & 0   & -u & v\\
 0 & 0   & 0 & -u
\end{array}
\right] 
Sym^{r-2}(Q).
$$
\end{enumerate}
\end{lemma}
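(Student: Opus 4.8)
The plan is to obtain assertion (1) straight from the defining property of $Sym^{r-1}(Q)$, and assertion (2) by recognising the matrix of relations among $x^{r-1},\dots,y^{r-1}$ as a Koszul-type map and exploiting its naturality. For (1): reading off $[\,x\ y\,]=[\,u\ v\,]Q$ gives $x=au+bv$ and $y=cu+dv$, so the defining identity for $Sym^{k}$ with $X=x$, $Y=y$, $U=u$, $V=v$ and $k=r-1$ reads
\[
[\,x^{r-1}\ x^{r-2}y\ \cdots\ y^{r-1}\,]=[\,u^{r-1}\ u^{r-2}v\ \cdots\ v^{r-1}\,]\,Sym^{r-1}(Q).
\]
Since $a_j$ equals $[\,x^{r-1}\ \cdots\ y^{r-1}\,]$ times the $j$-th column of $(a_{i,j})$, substituting shows that $a_j$ equals $[\,u^{r-1}\ \cdots\ v^{r-1}\,]$ times the $j$-th column of $Sym^{r-1}(Q)(a_{i,j})$, which is (1).

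For (2), write $N_r(x,y)$ for the $r\times(r-1)$ matrix formed by the last $r-1$ columns of $(\ast)$ and $N_r(u,v)$ for its $(u,v)$-analogue. I would argue functorially. Let $V=R^{2}$ with standard basis $e_1,e_2$, so that $Sym^{k}(V)$ has basis $\{e_1^{k-i}e_2^{i}\}_{i=0}^{k}$ — matching the convention defining $Sym^{k}$ — and $\wedge^{2}V$ is free of rank one with $\wedge^{2}Q=\det(Q)\cdot\mathrm{id}$. Under these identifications, together with $\wedge^{2}V\otimes Sym^{r-2}(V)\cong Sym^{r-2}(V)\cong R^{r-1}$, the matrix $N_r(x,y)$ is exactly the $R$-linear map $\wedge^{2}V\otimes Sym^{r-2}(V)\to Sym^{r-1}(V)$ sending $e_1\wedge e_2\otimes w\mapsto y\,(e_1w)-x\,(e_2w)$; this map factors as a suitably normalised Koszul map $\kappa_V\colon\wedge^{2}V\otimes Sym^{r-2}(V)\to V\otimes Sym^{r-1}(V)$ followed by $\tilde\sigma_{x,y}\otimes\mathrm{id}$, where $\tilde\sigma_{x,y}\colon V\to R$ sends $e_1\mapsto x$ and $e_2\mapsto y$; similarly with $u,v$ in place of $x,y$. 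Since $\tilde\sigma_{x,y}=\tilde\sigma_{u,v}\circ Q$ and $\kappa$ is a natural transformation, naturality together with $\wedge^{2}Q=\det(Q)\cdot\mathrm{id}$ gives $(Q\otimes Sym^{r-1}Q)\circ\kappa_V=\det(Q)\,\kappa_V\circ(\mathrm{id}_{\wedge^{2}V}\otimes Sym^{r-2}Q)$, and inserting this into the factorisation of $N_r(x,y)$ yields $Sym^{r-1}(Q)\circ N_r(x,y)=\det(Q)\,N_r(u,v)\circ(\mathrm{id}_{\wedge^{2}V}\otimes Sym^{r-2}Q)$; written out as matrices, where $\mathrm{id}_{\wedge^{2}V}\otimes Sym^{r-2}Q$ is simply $Sym^{r-2}(Q)$, this is (2). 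I prefer this route because it isolates the source of the factor $\det(Q)$ — namely $\wedge^{2}Q$.

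A more pedestrian alternative is available: $Sym^{k}$ is multiplicative ($Sym^{k}(Q_1Q_2)=Sym^{k}(Q_1)Sym^{k}(Q_2)$ and $Sym^{k}(I)=I$), and $GL_2(R)$ is generated by diagonal and elementary matrices, so it is enough to verify (2) for $Q$ in a short list; for diagonal $Q$ both sides are obtained from $N_r$ by rescaling rows and columns by powers of the entries, and for unipotent $Q$ the entries of $Sym^{k}(Q)$ are binomial coefficients and (2) collapses to Pascal's rule. In both approaches everything except the identification of the right-hand factor $\det(Q)Sym^{r-2}(Q)$ is formal, and that identification is the one place needing care: in the functorial proof, keeping straight the three modules $\wedge^{2}V\otimes Sym^{r-2}(V)$, $Sym^{r-2}(V)$ and $R^{r-1}$, the re-indexing of their bases, and the sign in $\kappa_V$; in the elementary proof, the binomial bookkeeping in the unipotent case. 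I would present the functorial argument.
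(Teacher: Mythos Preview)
Your proof is correct. For (1) you do exactly what the paper does. For (2) your functorial argument is sound: the map $\kappa_V$ you describe is (up to a global sign) the natural Koszul differential $\partial\colon \wedge^2 V\otimes Sym^{r-2}V\to V\otimes Sym^{r-1}V$, $(v_1\wedge v_2)\otimes w\mapsto v_1\otimes v_2w-v_2\otimes v_1w$, which is visibly basis-free; the naturality identity you state follows (and the $\det(Q)$ indeed comes from $\wedge^2 Q$), and then the chain of equalities
\[
Sym^{r-1}(Q)\circ(\tilde\sigma_{x,y}\otimes\mathrm{id})\circ\kappa_V
=(\tilde\sigma_{u,v}\otimes\mathrm{id})\circ(Q\otimes Sym^{r-1}Q)\circ\kappa_V
=\det(Q)\,(\tilde\sigma_{u,v}\otimes\mathrm{id})\circ\kappa_V\circ Sym^{r-2}(Q)
\]
goes through once one checks that the paper's matrix $Sym^k(Q)$ is exactly the matrix of the linear map $Sym^k(Q)$ in the monomial basis, which it is. Your ``pedestrian alternative'' is also valid over a local ring, since $GL_2(R)$ is then generated by elementary and diagonal matrices and the identity is closed under composition via multiplicativity of $Sym^k$ and of $\det$.

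The paper's own proof of (2) takes a different, more elementary route: it writes $N_r(x,y)=u\cdot A+v\cdot B$ and $N_r(u,v)=u\cdot C+v\cdot D$ with explicit constant matrices $A,B,C,D$ (entries in $\{0,\pm1,a,b,c,d\}$), compares coefficients of $u$ and $v$ to reduce (2) to the two scalar-free identities $Sym^{r-1}(Q)\,A=\det(Q)\,C\,Sym^{r-2}(Q)$ and $Sym^{r-1}(Q)\,B=\det(Q)\,D\,Sym^{r-2}(Q)$, and then verifies each by premultiplying with the row $[U^{r-1}\ \cdots\ V^{r-1}]$ and invoking the defining relation for $Sym^k(Q)$. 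That approach is entirely self-contained and needs no multilinear machinery; your approach is more conceptual, makes transparent why $\det(Q)$ appears, and would generalise more readily. Either is fine here.
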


\begin{proof} (1) follows easily from definitions. 
To show (2) write,
\begin{eqnarray*}
\left[
\begin{array}{rrrrrrrr}
 y & 0 &   0 & 0\\
 -x & y &   0 & 0\\
  \vdots & \ddots  & \ddots & \vdots\\
0 & 0   & -x & y\\
 0 & 0   & 0 & -x
\end{array}
\right]
&=&
u
\left[
\begin{array}{rrrrrrrr}
 c & 0 &   0 & 0\\
 -a & c &   0 & 0\\
  \vdots & \ddots  & \ddots & \vdots\\
0 & 0   & -a & c\\
 0 & 0   & 0 & -a
\end{array}
\right]
+
v
\left[
\begin{array}{rrrrrrrr}
 d & 0 &   0 & 0\\
 -b & d &   0 & 0\\
  \vdots & \ddots  & \ddots & \vdots\\
0 & 0   & -b & d\\
 0 & 0   & 0 & -b
\end{array}
\right],\\
\left[
\begin{array}{rrrrrrrr}
 v & 0 &   0 & 0\\
 -u & v &   0 & 0\\
  \vdots & \ddots  & \ddots & \vdots\\
0 & 0   & -u & v\\
 0 & 0   & 0 & -u
\end{array}
\right]
&=&
u
\left[
\begin{array}{rrrrrrrr}
 0 & 0 &   0 & 0\\
 -1 & 0 &   0 & 0\\
  \vdots & \ddots  & \ddots & \vdots\\
0 & 0   & -1 & 0\\
 0 & 0   & 0 & -1
\end{array}
\right]
+
v
\left[
\begin{array}{rrrrrrrr}
 1 & 0 &   0 & 0\\
 0 & 1 &   0 & 0\\
  \vdots & \ddots  & \ddots & \vdots\\
0 & 0   & 0 & 1\\
 0 & 0   & 0 & 0
\end{array}
\right],
\end{eqnarray*}
and comparing coefficients of $u,v$ on either side, reduce to showing
\begin{eqnarray*}
Sym^{r-1}(Q)
\left[
\begin{array}{rrrrrrrr}
 c & 0 &   0 & 0\\
 -a & c &   0 & 0\\
  \vdots & \ddots  & \ddots & \vdots\\
0 & 0   & -a & c\\
 0 & 0   & 0 & -a
\end{array}
\right]
&=&
det(Q)
\left[
\begin{array}{rrrrrrrr}
 0 & 0 &   0 & 0\\
 -1 & 0 &   0 & 0\\
  \vdots & \ddots  & \ddots & \vdots\\
0 & 0   & -1 & 0\\
 0 & 0   & 0 & -1
\end{array}
\right]
Sym^{r-2}(Q),\\
Sym^{r-1}(Q)
\left[
\begin{array}{rrrrrrrr}
 d & 0 &   0 & 0\\
 -b & d &   0 & 0\\
  \vdots & \ddots  & \ddots & \vdots\\
0 & 0   & -b & d\\
 0 & 0   & 0 & -b
\end{array}
\right]
&=&
det(Q)
\left[
\begin{array}{rrrrrrrr}
 1 & 0 &   0 & 0\\
 0 & 1 &   0 & 0\\
  \vdots & \ddots  & \ddots & \vdots\\
0 & 0   & 0 & 1\\
 0 & 0   & 0 & 0
\end{array}
\right]
Sym^{r-2}(Q).
\end{eqnarray*}

The equations are verified by premultiplying each with $\left[
\begin{array}{cccc}
U^{r-1} & U^{r-2}V & \cdots & V^{r-1}
\end{array}
\right]$ and using the defining property of $Sym^k(Q)$ (for $k=r-1$ on the left hand side and for $k=r-2$ on the right hand side).
\end{proof}

To summarise, we have proved the following proposition.

\begin{proposition}
Let $I$ be an $\mathfrak m$-primary ideal 
with $ord(I) = m$, $\mu(I)=n+1$ and let $r \leq m$.
Then, up to isomorphism, the module $M_r(I)$ defined by the matrix $($\ref{spmod}$)$ above does not depend on the chosen generating system of $I$, the 
generating system of $\mathfrak m$ and the coefficients expressing each generator
in terms of $\{x^{r-1}, \dots , y^{r-1}\}$. \qed
\end{proposition}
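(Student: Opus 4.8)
The plan is to verify the three asserted independences one at a time, in each case realising the change of data as a sequence of operations on the matrix (\ref{spmod}) that visibly preserve the $R$-submodule of $F$ generated by its columns, or at least its isomorphism class.

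\emph{Independence of the coefficients.} Keep the generating systems of $I$ and of $\mathfrak m$ fixed and suppose $a_j=\sum_{i=1}^r a_{i,j}\,x^{r-i}y^{i-1}=\sum_{i=1}^r a_{i,j}'\,x^{r-i}y^{i-1}$. Then the column $(a_{i,j}-a_{i,j}')_i$ is a syzygy of $(x^{r-1},x^{r-2}y,\dots,y^{r-1})$. The one substantive input needed here is that every such syzygy is an $R$-linear combination of the $r-1$ evident ones (each equating $y$ times a generator to $x$ times the next), i.e.\ that the last $r-1$ columns of (\ref{spmod}) form a Hilbert--Burch matrix for the perfect ideal $\mathfrak m^{r-1}$; one checks directly that the maximal minors of that bidiagonal $r\times(r-1)$ matrix are, up to sign, the $r$ monomials of degree $r-1$, which generate $\mathfrak m^{r-1}$, so it is indeed the Hilbert--Burch matrix and its columns generate the whole syzygy module. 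Granting this, changing the coefficient choice adds $R$-linear combinations of the last $r-1$ columns of (\ref{spmod}) to its first $n+1$ columns, an operation that does not change the submodule generated.

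\emph{Independence of the minimal generating set of $I$.} If $(a_1',\dots,a_{n+1}')$ is another minimal generating set, write $a_j'=\sum_k c_{kj}a_k$ with $C=(c_{kj})$; by Nakayama $C$ is invertible modulo $\mathfrak m$, hence $C\in GL_{n+1}(R)$, and a coefficient block for the $a_j'$ is obtained from one for the $a_j$ by right multiplication by $C$. Thus the first $n+1$ columns of (\ref{spmod}) undergo an invertible column operation and the last $r-1$ columns are untouched, so the submodule generated is unchanged. \emph{Independence of the generating set of $\mathfrak m$.} Suppose $\mathfrak m=(u,v)$ with $[x\ y]=[u\ v]\,Q$ for some $Q\in GL_2(R)$. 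The defining property of $Sym^k$ gives $Sym^k(PQ)=Sym^k(P)\,Sym^k(Q)$ at once, so $Sym^{r-1}(Q)\in GL_r(R)$ and $Sym^{r-2}(Q)\in GL_{r-1}(R)$. Left-multiplying (\ref{spmod}) by $Sym^{r-1}(Q)$ therefore yields a matrix whose columns generate $Sym^{r-1}(Q)\cdot M_r(I)$, a module isomorphic to $M_r(I)$ (formed throughout with respect to $x,y$). By Lemma \ref{indep}(1) its first $n+1$ columns express the $a_j$ in terms of $u^{r-1},\dots,v^{r-1}$, and by Lemma \ref{indep}(2) its last $r-1$ columns equal $\det(Q)$ times the $r\times(r-1)$ bidiagonal matrix built from $u,v$, postmultiplied by $Sym^{r-2}(Q)$. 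Since $\det(Q)$ is a unit and $Sym^{r-2}(Q)\in GL_{r-1}(R)$, those last $r-1$ columns span the same $R$-submodule as the bidiagonal $(u,v)$-columns; so, after an invertible column operation on them, the matrix becomes one of the form (\ref{spmod}) written with respect to $u,v$ for a suitable coefficient choice, and by the coefficient independence already proved its column span is exactly $M_r(I)$ formed with respect to $u,v$. This gives the required isomorphism.

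\emph{Main obstacle.} There is essentially no obstacle: the proposition merely collects the three observations made just before it together with Lemma \ref{indep}, and the only genuine ingredient is the (standard) Hilbert--Burch presentation of $\mathfrak m^{r-1}$ used in the first step. The one point that must not be glossed over is that changing the generating set of $\mathfrak m$ forces the particular coefficient choice produced by Lemma \ref{indep}(1), so the three parts must be assembled in the order above, with coefficient independence invoked last.
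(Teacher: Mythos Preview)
Your proof is correct and follows essentially the same route as the paper: the paper's argument is the paragraph preceding the proposition together with Lemma \ref{indep}, treating the three independences in the same order and via the same mechanisms (syzygies of $\mathfrak m^{r-1}$ for the coefficient choice, invertible column operations for the generating set of $I$, and left multiplication by $Sym^{r-1}(Q)$ combined with Lemma \ref{indep} for the generating set of $\mathfrak m$). You supply a bit more detail than the paper does, notably the explicit Hilbert--Burch justification that the bidiagonal columns span all syzygies and the observation that $Sym^k$ is multiplicative so $Sym^{r-1}(Q)$ and $Sym^{r-2}(Q)$ are invertible, but there is no substantive difference in approach.
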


\color{black}

Throughout this paper we will use several times, usually without specific mention, that pre- or post-multiplying a matrix by invertible matrices does not change the isomorphism class of the module generated by the columns of that matrix.
We will next prove several properties of $M_r(I)$. The notation $I_k(M)$ refers to the ideal generated by the  $k \times k$ minors of a matrix whose columns generate $M$. Thus, $I_r(M) = I(M)$ for a module $M$ of rank $r$.

\begin{proposition}\label{fittingideals} Let $M = M_r(I)$ for an integrally closed ${\mathfrak m}$-primary ideal $I$ with $ord(I) = n \geq r$.
Then, $I_k(M) = {\mathfrak m}^k$ for $k=1,2,\cdots,r-1$ and $I(M) = I_r(M) = I$.
\end{proposition}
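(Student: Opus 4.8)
The plan is to prove separately the four inclusions $I_k(M)\subseteq{\mathfrak m}^k$ and ${\mathfrak m}^k\subseteq I_k(M)$ for $1\le k\le r-1$, together with $I\subseteq I_r(M)$ and $I_r(M)\subseteq I$. Write the matrix $(\ref{spmod})$ defining $M=M_r(I)$ as $[\,A\mid B\,]$, where $A$ is the $r\times(n+1)$ block with entries $a_{i,j}$ and $B$ is the $r\times(r-1)$ block whose $k$th column is $ye_k-xe_{k+1}$. Since $ord(I)=n\ge r$ forces all $a_{i,j}\in{\mathfrak m}$, and the nonzero entries of $B$ are $\pm x,\pm y$, every entry of $[\,A\mid B\,]$ lies in ${\mathfrak m}$; hence every $k\times k$ minor lies in ${\mathfrak m}^k$ and $I_k(M)\subseteq{\mathfrak m}^k$ for all $k$, in particular $I_r(M)\subseteq{\mathfrak m}^r$. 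For the reverse inclusion when $k\le r-1$ I would look at the $r\times k$ submatrix formed by the first $k$ columns of $B$: deleting any one of its rows $1,\dots,k+1$ leaves a triangular or block‑triangular matrix whose determinant equals, up to sign, one of the monomials $x^k,x^{k-1}y,\dots,y^k$. These generate ${\mathfrak m}^k$, so ${\mathfrak m}^k\subseteq I_k(M)$, and therefore $I_k(M)={\mathfrak m}^k$ for $1\le k\le r-1$.

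For $I\subseteq I_r(M)$ I would compute the maximal minor of $[\,A_j\mid B\,]$, the $j$th column of $A$ together with all of $B$. Expanding along the first column and using that the $(r-1)\times(r-1)$ minors of $B$ are, up to sign, exactly $x^{r-1},x^{r-2}y,\dots,y^{r-1}$, this determinant equals $\pm a_j$ because $a_j=\sum_i a_{i,j}x^{r-i}y^{i-1}$. Since the $a_j$ generate $I$, this gives $I\subseteq I_r(M)$.

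The remaining inclusion $I_r(M)\subseteq I$ is the crux, and the only point where integral‑closedness of $I$ is used. Let $\delta=\det N$ be the maximal minor on some $r$ columns of $[\,A\mid B\,]$; since $B$ has only $r-1$ columns, at least one of them, say $A_{j_1},\dots,A_{j_t}$ with $1\le t\le r$, comes from $A$. Put $\rho=(x^{r-1},x^{r-2}y,\dots,y^{r-1})$, a row vector; the relations $\rho A_j=a_j$ and $\rho(ye_k-xe_{k+1})=0$ show that $\rho N$ is a vector $b^{T}$ whose entries all lie in $\{a_{j_1},\dots,a_{j_t},0\}$. The identity $\operatorname{adj}(N^{T})\,b=\operatorname{adj}(N^{T})N^{T}\rho^{T}=\delta\,\rho^{T}$ (Cramer's rule, valid over any commutative ring) gives, for each $i$, that $\delta\,x^{r-i}y^{i-1}=\det(N_i)$, where $N_i$ is $N$ with its $i$th row replaced by $b^{T}$; expanding $\det(N_i)$ along that row writes it as an $R$‑linear combination of $a_{j_1},\dots,a_{j_t}$ with coefficients that are $(r-1)\times(r-1)$ minors of $N$, hence elements of ${\mathfrak m}^{r-1}$. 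Thus $\delta\,x^{r-i}y^{i-1}\in I{\mathfrak m}^{r-1}$ for all $i$, and since $x^{r-1},\dots,y^{r-1}$ generate ${\mathfrak m}^{r-1}$ we obtain $\delta\,{\mathfrak m}^{r-1}\subseteq I{\mathfrak m}^{r-1}$. As ${\mathfrak m}^{r-1}$ is a finitely generated faithful $R$‑module, the determinant trick (the faithful‑module criterion for integral dependence of an element on an ideal; see Chapter 1 of \cite{HnkSwn2006}) shows $\delta$ is integral over $I$, so $\delta\in\overline{I}=I$. Since $\delta$ ranges over a generating set of $I_r(M)$, this yields $I_r(M)\subseteq I$ and, with the previous paragraph, $I_r(M)=I$.

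The main obstacle is thus this last step: extracting from the rigid shape of $(\ref{spmod})$ the relation $\delta\,{\mathfrak m}^{r-1}\subseteq I{\mathfrak m}^{r-1}$, so that integral dependence of each maximal minor on $I$ can be invoked. Everything else is a direct computation with the bidiagonal block $B$. I would also remark that this argument shows $I_r(M)\subseteq\overline{I}$ for every ${\mathfrak m}$‑primary $I$ with $ord(I)\ge r$, the hypothesis that $I$ is integrally closed being needed only to upgrade this to the equality $I_r(M)=I$.
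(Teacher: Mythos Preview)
Your proof is correct and follows essentially the same approach as the paper: the easy inclusions come from the bidiagonal block $B$, and the key inclusion $I_r(M)\subseteq I$ is obtained exactly as in the paper by multiplying the relation $\rho N\subseteq I$ by the adjugate to get $\delta\,{\mathfrak m}^{r-1}\subseteq I{\mathfrak m}^{r-1}$, after which integral closedness of $I$ finishes (the paper phrases this last step as $\det(A)\in({\mathfrak m}^{r-1}I:{\mathfrak m}^{r-1})=I$, which is the same conclusion). Your closing remark that the argument actually gives $I_r(M)\subseteq\overline{I}$ without assuming $I$ integrally closed is correct and worth noting.
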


\begin{proof} Since all $a_{i,j} \in {\mathfrak m}$, it is clear that for $k=1,2,\cdots,r-1$, $I_k(M) = {\mathfrak m}^k$ just from the minors of the last $r-1$ columns. It is also clear that $I_r(M)$ contains $I$ because for any  $j \in \{1,\cdots,n+1\}$, the $r \times r$ minor formed by taking the
$j^{th}$ column and the last $r-1$ columns is $(-1)^{r-1}a_j$.  It remains to see that $I_r(M) \subseteq I$.

For this, observe first, that by construction, the matrix equation
$$
\left[
\begin{array}{cccc}
x^{r-1} & x^{r-2}y & \cdots & y^{r-1}
\end{array}
\right]
\left[
\begin{array}{ccccrrrr}
a_{1,1} & a_{1,2}  & \cdots & a_{1,n+1}  & y & 0 &   0 & 0\\
a_{2,1} & a_{2,2}  & \cdots & a_{2,n+1}  & -x & y &   0 & 0\\
\vdots & \vdots &  & \vdots  & \vdots & \ddots  & \ddots & \vdots\\
a_{r-1,1} & a_{r-1,2} & \cdots & a_{r-1,n+1}  & 0 & 0   & -x & y\\
a_{r,1} & a_{r,2} & \cdots & a_{r,n+1} & 0 & 0   & 0 & -x
\end{array}
\right]
$$
$$
 = 
 $$
 $$
\left[
\begin{array}{cccccccc}
a_1 & a_2 & \cdots & a_{n+1} & 0 & 0 & \cdots & 0
\end{array}
\right],
$$
holds. Let $A$ be any $r \times r$ submatrix obtained by choosing some $r$ columns of generators. The previous equation implies that
$$
\left[
\begin{array}{cccc}
x^{r-1} & x^{r-2}y & \cdots & y^{r-1}
\end{array}
\right] A \subseteq 
\left[
\begin{array}{cccc}
I & I & \cdots & I 
\end{array}
\right],
$$
in the sense that each entry of the row-matrix on the left is in $I$. Multiplying by $adj(A)$ on the right gives
\begin{eqnarray*}
\left[
\begin{array}{cccc}
x^{r-1} & x^{r-2}y & \cdots & y^{r-1}
\end{array}
\right] A .adj(A) &\subseteq& 
\left[
\begin{array}{cccc}
I & I & \cdots & I 
\end{array}
\right]adj(A)\\
&\subseteq&
\left[
\begin{array}{cccc}
{\mathfrak m}^{r-1}I & {\mathfrak m}^{r-1}I & \cdots & {\mathfrak m}^{r-1}I 
\end{array}
\right],
\end{eqnarray*}
where the second inclusion follows since every entry of $adj(A)$ is in ${\mathfrak m}^{r-1}$. Hence ${\mathfrak m}^{r-1}det(A) \in {\mathfrak m}^{r-1}I$.  So $det(A) \in ({\mathfrak m}^{r-1}I:{\mathfrak m}^{r-1}) = I$ since $I$ is integrally closed.

Since $I_r(M)$ is generated by all such $det(A)$, we see that $I_r(M) \subseteq I$ too.
\end{proof}

\begin{proposition}\label{isic} Let $M = M_r(I)$ for an integrally closed ${\mathfrak m}$-primary ideal $I$ with $ord(I) = n \geq r$.
Then, $M$ is integrally closed.
\end{proposition}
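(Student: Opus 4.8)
The plan is to combine the determinantal criterion of integral dependence with an explicit description of membership in $M = M_r(I)$. Write $C$ for the $r \times (r-1)$ matrix consisting of the last $r-1$ columns of the defining matrix $(\ref{spmod})$, and let $w = \begin{bmatrix} x^{r-1} & x^{r-2}y & \cdots & y^{r-1}\end{bmatrix}$, regarded as a row vector. By the matrix identity recorded in the proof of Proposition \ref{fittingideals}, multiplying $(\ref{spmod})$ on the left by $w$ yields $\begin{bmatrix} a_1 & \cdots & a_{n+1} & 0 & \cdots & 0\end{bmatrix}$; in particular $wC = 0$, and $w$ applied to the $j$th generator column of $(\ref{spmod})$ equals $a_j$.

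The first step is a membership criterion: for a column vector $v \in R^r$ one has $v \in M$ if and only if $wv \in I$. The ``only if'' direction is immediate from the identity just mentioned. For the converse, given $wv = \sum_j r_j a_j$, I would subtract $\sum_j r_j$ times the $j$th generator column of $(\ref{spmod})$ from $v$; this changes neither the hypothesis nor membership in $M$, and reduces to the case $wv = 0$, i.e. $v$ is a syzygy among $x^{r-1}, x^{r-2}y, \ldots, y^{r-1}$. Since these $r$ monomials generate $\mathfrak{m}^{r-1}$ and the columns of $C$ are precisely the map in the minimal free resolution $0 \to R^{r-1} \to R^r \to \mathfrak{m}^{r-1} \to 0$ (the Hilbert--Burch resolution), every such syzygy is an $R$-linear combination of the columns of $C$ and so lies in $M$.

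The second step is the computation of $\det\begin{bmatrix} v & C\end{bmatrix}$ for $v \in R^r$. The assignment $v \mapsto \det\begin{bmatrix} v & C\end{bmatrix}$ is $R$-linear and annihilates every column of $C$ (a repeated column gives determinant zero), so it is given by a row vector lying in the left kernel of $C$. As $C$ has rank $r-1$, that left kernel has rank one; it is a first syzygy module, hence free over the two-dimensional regular local ring $R$, and since the entries $x^{r-1}, \ldots, y^{r-1}$ of $w$ have no common factor, $w$ generates it. Evaluating at $v = e_1$, where the determinant is $(-x)^{r-1}$ and $we_1 = x^{r-1}$, pins down the scalar: $\det\begin{bmatrix} v & C\end{bmatrix} = (-1)^{r-1}\,wv$ for all $v \in R^r$.

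Finally, let $v$ lie in the integral closure of $M$. Applying the determinantal criterion to the $r \times r$ matrix $\begin{bmatrix} v & C\end{bmatrix}$, all of whose columns other than $v$ belong to $M$, shows that $(-1)^{r-1}wv$, hence $wv$, is integral over $I(M)$, which equals $I$ by Proposition \ref{fittingideals}. Because $I$ is integrally closed, $wv \in I$, and the membership criterion of the first step then gives $v \in M$. Hence $\overline{M} = M$. The one place the argument draws on something beyond linear algebra over the domain $R$ is the identification, in the first step, of the syzygy module of $x^{r-1}, \ldots, y^{r-1}$ with the column span of $C$; that is where I expect the real content to sit, while the determinant computation, although it requires a little bookkeeping with signs, is routine.
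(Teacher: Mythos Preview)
Your proof is correct and follows essentially the same approach as the paper's: both apply the determinantal criterion to $v$ together with the last $r-1$ columns to obtain $wv \in \overline{I(M)} = I$, and then use that any vector with $wv \in I$ lies in $M$ because, after subtracting a combination of the first $n+1$ columns, what remains is a syzygy among $x^{r-1},\dots,y^{r-1}$ and hence a combination of the last $r-1$ columns. Your explicit packaging of this as a membership criterion and your separate verification that $\det[v\mid C] = (-1)^{r-1}wv$ via the left kernel of $C$ are more detailed than the paper's presentation, but the underlying argument is the same.
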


\begin{proof} We will appeal to the determinantal criterion to show that $M$ is integrally closed. Consider a vector
$$
v=
\left[
\begin{array}{c}
v_1\\v_2\\ \vdots\\v_r
\end{array}
\right] \in F
$$
that is integral over $M$. We need to see that $v \in M$. By the determinantal criterion applied to this vector and the last $r-1$ columns of $M$, $$v_1x^{r-1} +v_2x^{r-2}y + \cdots +v_ry^{r-1} \in \overline{I(M)} = I,$$ by Proposition \ref{fittingideals}. Hence there exist $c_1,\cdots,c_{n+1} \in R$ such that
$$
\left[
\begin{array}{cccc}
x^{r-1} & x^{r-2}y & \cdots & y^{r-1}
\end{array}
\right]
\left[
\begin{array}{c}
v_1\\v_2\\ \vdots\\v_r
\end{array}
\right]=
$$
$$
\left[
\begin{array}{cccc}
x^{r-1} & x^{r-2}y & \cdots & y^{r-1}
\end{array}
\right]
\left[
\begin{array}{cccc}
a_{1,1} & a_{1,2}  & \cdots & a_{1,n+1} \\
a_{2,1} & a_{2,2}  & \cdots & a_{2,n+1} \\
\vdots & \vdots &  & \vdots   \\
a_{r-1,1} & a_{r-1,2} & \cdots & a_{r-1,n+1} \\
a_{r,1} & a_{r,2} & \cdots & a_{r,n+1}
\end{array}
\right]
\left[
\begin{array}{c}
c_1\\c_2\\ \vdots\\ \vdots \\c_{n+1}
\end{array}
\right].
$$
This is because the product of the first two matrices on the right hand side gives a row vector with entries being a generating set of $I$. Hence
$$
\left[
\begin{array}{c}
v_1\\v_2\\ \vdots\\v_r
\end{array}
\right] 
-
\left[
\begin{array}{cccc}
a_{1,1} & a_{1,2}  & \cdots & a_{1,n+1} \\
a_{2,1} & a_{2,2}  & \cdots & a_{2,n+1} \\
\vdots & \vdots &  & \vdots   \\
a_{r-1,1} & a_{r-1,2} & \cdots & a_{r-1,n+1} \\
a_{r,1} & a_{r,2} & \cdots & a_{r,n+1}
\end{array}
\right]
\left[
\begin{array}{c}
c_1\\c_2\\ \vdots\\ \vdots \\c_{n+1}
\end{array}
\right]
$$
gives relations between the $x^{r-1},\cdots,y^{r-1}$ and is therefore in the span of the last $r-1$ columns of $M$. It follows that $v \in M$, as desired.
\end{proof}

\begin{example}
{\rm 
Let $I=(x^2,y) \overline{(x^3,y^2)}=(x^5,x^3y,x^2y^2,y^3)$. 
Then 
\begin{enumerate}
\item $M_2(I)$ is the module generated by the columns of matrices such as  
$$\begin{bmatrix} 
x^4&x^2y&xy^2&0&y\\
0&0&0&y^2&-x
\end{bmatrix} \ \text{or} \ 
\begin{bmatrix} 
x^4&x^2y&0&0&y\\
0&0&x^2y&y^2&-x
\end{bmatrix}. $$
\item $M_3(I)$ is the module generated by the columns of matrices such as 
$$\begin{bmatrix}
x^3&xy&0&0&y&0\\
0&0&xy&0&-x&y\\
0&0&0&y&0&-x
\end{bmatrix} \ \text{or} \
\begin{bmatrix}
x^3&0&0&0&y&0\\
0&x^2&0&0&-x&y\\
0&0&x^2&y&0&-x
\end{bmatrix}. $$
\end{enumerate}
}
\end{example}

\begin{example}
{\rm 
Let $I=(x^2,y) \overline{(x^2,y^3)}=(x^4,x^2y,xy^3,y^4)$. Then 
\begin{enumerate}
\item $M_2(I)$ is the module generated by the columns of matrices such as  
$$\begin{bmatrix} x^3&xy&y^3&0&y\\
0&0&0&y^3&-x 
\end{bmatrix} \ \text{or} \ 
\begin{bmatrix} x^3&0&0&0&y\\
0&x^2&xy^2&y^3&-x 
\end{bmatrix}. $$
\item $M_3(I)$ is the module generated by the columns of matrices such as 
$$\begin{bmatrix}
x^2&y&0&0&y&0\\
0&0&y^2&0&-x&y\\
0&0&0&y^2&0&-x
\end{bmatrix} \ \text{or} \
\begin{bmatrix}
x^2&0&0&0&y&0\\
0&x&0&0&-x&y\\
0&0&xy&y^2&0&-x
\end{bmatrix}. $$
\end{enumerate}
}
\end{example}

\smallskip

\section{Existence results}\label{exissec}

Before going on to prove existence results for general rank $r$, we will need to recall various facts. 
Recall that a parameter module $P$ of rank $r$ is a module generated by the columns of an $r \times (r+1)$ matrix with all entries in ${\mathfrak m}$ and ${\mathfrak m}$-primary ideal of maximal minors $I(P)$.
Dualising the Hilbert-Burch resolution
\begin{equation*}
\xymatrix@C+2pc{
0 \ar[r]
&
  R^{r} \ar[r]^{P^T}
  &
  R^{r+1} \ar[r]^{I(P)}
  &
  R
   \ar[r]
  & 
  0,
}
\end{equation*}
of $\frac{R}{I(P)}$ gives a resolution of $Ext^2(\frac{R}{I(P)},R)$,  which, by local duality - see Theorem 3.5.8 of \cite{BrnHrz1993} - is the Matlis dual of $\frac{R}{I(P)}$ and hence has the same length. 
The notation $\lambda(\cdot)$ stands for the length function on $R$-modules.
Thus with $F=R^{r}$, we have that $\lambda(\frac{F}{P}) =\lambda(\frac{R}{I(P)})$. Every torsion-free module $M$ without free direct summands contains a parameter submodule $P$ over which it is integral (or equivalently $P$ is a minimal reduction of $M$) - see Lemma 2.2 of \cite{Res1987}.

We next recall a numerical identity for integrally closed modules which is the content of Corollary 4.3 of \cite{KdyMhn2015} and which we will refer to in the sequel as the length-multiplicity identity. The notation $e(M)$ in this identity stands for the Buchsbaum-Rim multiplicity of $M$ - see Theorem 3.1 of \cite{BchRim1964} -
which can be computed as $\lambda(\frac{F}{P})$ for any minimal reduction $P$ of $M$. The identity states that for integrally closed modules $M$ with ideal of minors $I$,
$$
\lambda(\frac{R}{I}) - \lambda(\frac{F}{M}) = e(I) - e(M).
$$

The third fact we recall is a certain inequality for integrally closed modules without free direct summands that occurs as 
Theorem 4.4 of \cite{Hys2022}. We give a different proof of this here.

\begin{proposition}\label{ineq} Let $M$ be an integrally closed module of rank $r$ with ideal of minors $I$ and without free direct summands. Then,
$$
\lambda(\frac{R}{I}) - \lambda(\frac{F}{M}) \geq \binom{r}{2}.
$$
\end{proposition}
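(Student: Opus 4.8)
The plan is to use the length-multiplicity identity together with a comparison of Buchsbaum--Rim multiplicities. By the length-multiplicity identity recalled above, $\lambda(R/I) - \lambda(F/M) = e(I) - e(M)$, so it suffices to prove $e(I) - e(M) \geq \binom{r}{2}$. Here $e(M) = \lambda(F/P)$ for any minimal reduction (parameter submodule) $P$ of $M$, and by the discussion preceding the proposition, $\lambda(F/P) = \lambda(R/I(P))$ where $I(P)$ is the ideal of maximal minors of $P$. Since $P$ is a reduction of $M$, $I(P)$ is a reduction of $I$, hence $e(I(P)) = e(I)$; and since $I$ is $\mathfrak m$-primary, $e(I) = \lambda(R/I(P)) - (\text{correction term})$ — more precisely, I would instead aim to bound things directly.

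The cleaner route: since $M$ has no free direct summand, $M \subseteq \mathfrak m F$, so $I = I(M) \subseteq \mathfrak m^r$. I would pass to a minimal reduction $P$ of $M$; then $P \subseteq \mathfrak m F$ as well (a minimal reduction of a module inside $\mathfrak m F$ can be chosen inside $\mathfrak m F$, or at least $I(P)$ is a reduction of $I \subseteq \mathfrak m^r$, forcing $\mathrm{ord}(I(P)) \geq r$). Then $e(M) = e(P) = \lambda(F/P) = \lambda(R/I(P)) = e(I(P)) = e(I)$ — wait, that would give equality, which is false. The resolution of this apparent contradiction is the key point: $\lambda(R/I(P)) \neq e(I(P))$ in general; rather $\lambda(R/J) \geq e(J)$ for $\mathfrak m$-primary $J$ with equality iff $J$ is a parameter ideal, and since $I(P) \subseteq \mathfrak m^r$ it is far from a parameter ideal. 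So the actual inequality I want is $e(I) = e(I(P)) \leq \lambda(R/I(P)) - \binom{r}{2}$, i.e. for any $\mathfrak m$-primary ideal $J \subseteq \mathfrak m^r$ in a two-dimensional regular local ring, $\lambda(R/J) - e(J) \geq \binom{r}{2}$.

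This last statement is the heart of the matter and I expect it to be the main obstacle — though it is classical. One proves it via the Hoskin--Deligne length formula (or Lipman's work): $\lambda(R/J) = \sum_{T} \binom{\mathrm{ord}_T(J^T) + 1}{2}$, the sum over infinitely near points $T$ of the base point structure of the integral closure of $J$, while $e(J) = \mathrm{ord}_R(J)^2$ when $J$ is, say, the stalk — hmm, more carefully: for an $\mathfrak m$-primary integrally closed ideal $e(J) = \mathrm{ord}(J)^2 - (\text{contributions})$... Actually the cleanest is: $\lambda(R/J) - e(J) = \sum_{T} \binom{\mathrm{ord}_T(J^T)}{2} \geq \binom{\mathrm{ord}_R(J)}{2} \geq \binom{r}{2}$, using only the $T = R$ term and $\mathrm{ord}_R(J) \geq r$. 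Alternatively, one avoids Hoskin--Deligne entirely: $\lambda(R/J) \geq \lambda(R/\mathfrak m^r) + \lambda(\mathfrak m^r/\mathfrak m^r \cap \overline{J}) $... I would instead reduce to the case $J = I$ integrally closed (replacing $J$ by $\overline{J}$ only decreases $\lambda(R/J)$ while preserving $e$, so proving it for integrally closed $J$ is harder, not easier — so keep $J = I(P)$, not integrally closed). So: I would use $\lambda(R/J) \geq \lambda(R/J + \mathfrak m^r) + \lambda((J+\mathfrak m^r)/J)$ and $\lambda(R/J+\mathfrak m^r) \geq$ ... this is getting complicated. The robust plan is: invoke the Hoskin--Deligne formula / Lipman's theory for the integral closure $\overline{J}$ of $J = I(P)$, note $\lambda(R/J) \geq \lambda(R/\overline{J})$, $e(J) = e(\overline{J})$, and $\lambda(R/\overline{J}) - e(\overline{J}) = \sum_{T}\binom{\mathrm{ord}_T}{2}$; keep only the term at the closed point of $\mathrm{Spec}\,R$, where $\mathrm{ord} \geq r$, to get $\geq \binom{r}{2}$. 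The main obstacle is thus citing/setting up the correct form of the Hoskin--Deligne length formula and verifying the order bound $\mathrm{ord}_R(I(P)) \geq r$ carefully from $P \subseteq \mathfrak m F$.
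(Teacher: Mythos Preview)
Your reduction via the length--multiplicity identity to bounding $e(I)-e(M)$ is the right start, and so is passing to a minimal reduction $P$ of $M$ to get $e(I)-e(M) = e(J) - \lambda(R/J)$ with $J=I(P)$. However, there is a sign slip: you write the goal as $\lambda(R/J)-e(J)\geq\binom{r}{2}$, but in fact $e(J)\geq \lambda(R/J)$ always (a two-generated reduction $(a,b)$ of $J$ gives $e(J)=\lambda(R/(a,b))\geq\lambda(R/J)$), and what you actually need is $e(J)-\lambda(R/J)\geq\binom{r}{2}$. More seriously, once the sign is fixed the Hoskin--Deligne route breaks down: the formula yields $e(\overline{J})-\lambda(R/\overline{J})=\sum_T\binom{r_T}{2}[T:R]\geq\binom{r}{2}$ for the integral closure $\overline{J}$, but since $J\subseteq\overline{J}$ one has $\lambda(R/J)\geq\lambda(R/\overline{J})$ and therefore only $e(J)-\lambda(R/J)\leq e(\overline{J})-\lambda(R/\overline{J})$. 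So Hoskin--Deligne supplies an \emph{upper} bound on the quantity you need to bound from below; the defect $\lambda(\overline{J}/J)$ is not controlled and can be large. This is a genuine gap, not just a citation issue.

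The paper's argument avoids this by exploiting the parameter-module structure directly rather than passing to $\overline{J}$. One first arranges, by a general position argument (replacing the reduction $N$ by $NQ^T$ for a suitable $Q\in GL_{r+1}(R)$), that the first two signed maximal minors $a_1,a_2$ of $N$ already form a reduction of $J=I(N)$. Comparing the Hilbert--Burch resolution of $R/J$ with the Koszul resolution of $R/(a_1,a_2)$ then shows that $J/(a_1,a_2)$ is resolved by $B^T$, where $B$ is the $r\times(r-1)$ submatrix of $N$ consisting of the last $r-1$ columns. Since $M\subseteq\mathfrak m F$ forces all entries of $N$, hence of $B$, to lie in $\mathfrak m$, one gets $I(B)\subseteq\mathfrak m^{r-1}$, and therefore
\[
e(J)-\lambda(R/J)=\lambda\!\left(\frac{J}{(a_1,a_2)}\right)=\lambda\!\left(\frac{R}{I(B)}\right)\geq\lambda\!\left(\frac{R}{\mathfrak m^{r-1}}\right)=\binom{r}{2}.
\]
The missing idea in your approach is precisely this general-position choice of the reduction together with the Hilbert--Burch comparison, which converts the problem into an elementary order bound on the Fitting ideal of a submatrix.
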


The proof of this proposition uses a general position argument that we will use again later in this paper.

\begin{proposition}\label{genposi} Let $M$ be a non-free, torsion-free 
$R$-module of rank $r$. 
There is a minimal reduction $N$ of $M$ such that if $N^T$ resolves the ideal $J = (a_1,\cdots,a_{r+1})$, then $a_1,a_2$ form a reduction of $J$.
\end{proposition}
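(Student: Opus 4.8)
The plan is to produce the matrix $N$ not by hunting for a new submodule of $F$ but by re‑presenting a minimal reduction we already have: the key observation is that right multiplication of an $r\times(r+1)$ matrix by an invertible $(r+1)\times(r+1)$ matrix does not change the submodule generated by its columns, while it transforms the vector of maximal minors in a controlled way, and in particular lets us replace the Hilbert--Burch generating set $(a_1,\dots,a_{r+1})$ of $J$ by any other ordered minimal generating set. Once this is available, the proposition reduces to the standard fact that, over a ring with infinite residue field, two general $R$‑linear combinations of a minimal generating set of an $\mathfrak m$‑primary ideal generate a reduction of it.

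Concretely, I would first reduce to the case where $M$ has no free direct summand (otherwise the hypothesis that $N^T$ resolves an ideal with $r+1$ generators is vacuous) and fix, as in Section~\ref{exissec}, a minimal reduction of $M$ together with a parameter matrix $N_0$ presenting it, so $N_0$ is $r\times(r+1)$ with entries in $\mathfrak m$, $J:=I(N_0)$ is $\mathfrak m$‑primary, $N_0^T$ is the minimal Hilbert--Burch resolution of $R/J$, and the signed maximal minors $b_1,\dots,b_{r+1}$ of $N_0$ (with $b_i$ equal to $(-1)^i$ times the determinant of $N_0$ with its $i$th column deleted) form a minimal generating set of $J$. The next step is to verify, via the Cauchy--Binet expansion, that for $Q\in GL_{r+1}(R)$ the matrix $N_0Q$ is again a parameter matrix for the same minimal reduction, with signed maximal minors equal to the entries of $\det(Q)\,Q^{-1}\mathbf b$, where $\mathbf b=(b_1,\dots,b_{r+1})^{T}$. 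Thus every ordered minimal generating set of $J$ (up to a common unit) arises as the Hilbert--Burch generators of $N_0Q$ for a suitable $Q$.

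To conclude, I would pick row vectors $\rho_1,\rho_2\in R^{r+1}$ sufficiently general that the two elements $\rho_1\cdot\mathbf b$ and $\rho_2\cdot\mathbf b$ of $J$ generate a reduction of $J$ (possible because $J$ is $\mathfrak m$‑primary, so its analytic spread is $\dim R=2$, and the residue field is infinite), note that being general $\rho_1,\rho_2$ are linearly independent modulo $\mathfrak m$ and hence extend to a matrix $P\in GL_{r+1}(R)$ having $\rho_1,\rho_2$ as its first two rows, and set $N:=N_0P^{-1}$. Then $N$ is a parameter matrix for a minimal reduction of $M$, $I(N)=J$, and $N^T$ resolves $J=(a_1,\dots,a_{r+1})$ with the $a_i$ the entries of $\det(P^{-1})\,P\mathbf b$; in particular $a_1,a_2$ are unit multiples of $\rho_1\cdot\mathbf b$ and $\rho_2\cdot\mathbf b$, so $(a_1,a_2)$ is a reduction of $J$, as required.

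The step I expect to require the most care is the transformation rule for the maximal minors under $N_0\mapsto N_0Q$; after that the argument is essentially formal, using only the Hilbert--Burch theorem and the standard existence of reductions. It is worth noting the alternative, more hands‑on plan of taking $N$ to be generated by $r+1$ general elements $w_1,\dots,w_{r+1}$ of $M$ and observing that $a_1=\pm\det(w_2,w_3,\dots,w_{r+1})$ and $a_2=\pm\det(w_1,w_3,\dots,w_{r+1})$ share the ``tail'' $w_3,\dots,w_{r+1}$: this works too, but it forces one to prove separately that the ideal $\bigl(\det(v,w_3,\dots,w_{r+1}) : v\in M\bigr)$ is a reduction of $\overline{I(M)}$ for a general tail, which is where the genuine work would lie; the re‑presentation argument above sidesteps that difficulty entirely.
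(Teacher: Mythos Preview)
Your proposal is correct and follows essentially the same approach as the paper: reduce to the case with no free summand, take any parameter matrix for a minimal reduction, and then right-multiply by a suitable element of $GL_{r+1}(R)$ so that the first two signed maximal minors become a prescribed reduction of $J$. The only cosmetic difference is that you justify the transformation rule for the minor vector via Cauchy--Binet (equivalently, the adjugate identity $\mathbf a = \operatorname{adj}(Q)\,\mathbf b$), whereas the paper obtains the same conclusion by manipulating the Hilbert--Burch exact sequence and invoking the uniqueness part of the Hilbert--Burch theorem; these are two phrasings of the same linear-algebraic fact.
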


\begin{proof} 
When $M$ has a free summand, write 
$M=G\oplus M'$ where $G$ is free and $M'$ has no free direct summands.  
Note that $F=G \oplus F'$ where $F'$ is the double $R$-duals of $M'$ and if 
$N'$ is a minimal reduction of $M'$, then $N=G\oplus N'$ is a minimal reduction of $M$ with $I(N)=I(N')$. Hence, we may assume that $M=M'$ has no free direct summands.

Let $P$ be a reduction of $M$ generated by $r+1$ elements and suppose that the signed minors of $P$
are $b_1,\cdots,b_{r+1}$ so that $P^T$ resolves the ideal $(b_1,\cdots,b_{r+1})$ minimally.

The ideal $J=(b_1,\cdots,b_{r+1})$ has a 2-generated reduction say $a_1,a_2$ which can be extended to a minimal generating set $(a_1,a_2,\cdots,a_{r+1})$ of the same ideal. There is a matrix $Q \in GL_{r+1}(R)$ such that
$$
B =\left[
\begin{array}{cccc}
b_1 & b_2 & \cdots & b_{r+1}
\end{array}
\right]
= 
\left[
\begin{array}{cccc}
a_1 & a_2 & \cdots & a_{r+1}
\end{array}
\right] Q = AQ.
$$
Since the sequence
\begin{equation*}
\xymatrix@C+2pc{
0 \ar[r]
&
  R^r \ar[r]^{P^T}
  &
  R^{r+1} \ar[r]^{AQ}
  &
  R
   \ar[r]
  & 
  \frac{R}{J}
   \ar[r]
  & 
  0
}
\end{equation*}
is exact, so is the sequence
\begin{equation*}
\xymatrix@C+2pc{
0 \ar[r]
&
  R^r \ar[r]^{QP^T}
  &
  R^{r+1} \ar[r]^{A}
  &
  R
   \ar[r]
  & 
  \frac{R}{J}
   \ar[r]
  & 
  0
}
\end{equation*}

So the signed minors of $QP^T$ are the $a_1,a_2,\cdots,a_{r+1}$ up to a unit of $R$ by the Hilbert-Burch theorem. Let $N$ be $PQ^T$ so that the columns of $P$ and $N$ generate the same submodule. Thus $N$ is a minimal reduction of $M$ with $N^T = QP^T$  resolving the ideal $(a_1,\cdots,a_{r+1})$  with $a_1,a_2$ forming a minimal reduction of it, as needed.
\end{proof}

\begin{proof}[Proof of Proposition \ref{ineq}] By Proposition \ref{genposi}, choose a minimal reduction $N$ of $M$ such that if $N^T$ resolves the ideal $J = (a_1,\cdots,a_{r+1})$, then $a_1,a_2$ form a reduction of $J$. Then,
$$
\lambda(\frac{R}{I}) - \lambda(\frac{F}{M}) = e(I) - e(M)  = e(J) -  \lambda(\frac{F}{N}) = e(J) -  \lambda(\frac{R}{J}),
$$
where the first equality is the length-multiplicity identity, the second follows from the determinantal criterion since $N$ is a reduction of $M$ and the last from properties of parameter modules.

If $B$ is the submatrix of $N$ of the last $r-1$ columns, then standard homological algebra comparing the Koszul resolution of $a_1,a_2$ with that of the resolution of $J$ shows that $B^T$ resolves the quotient module $\frac{J}{(a_1,a_2)}$. Since $B^T$ is also a parameter matrix, $\lambda(\frac{J}{(a_1,a_2)}) = \lambda(\frac{R}{I(B)})$. But all entries of $M$ hence of $N$ and of $B$ are in ${\mathfrak m}$, so that $I(B) \subseteq {\mathfrak m}^{r-1}$. So finally,
$$
e(J) -  \lambda(\frac{R}{J}) = \lambda(\frac{R}{(a_1,a_2)}) -  \lambda(\frac{R}{J}) =\lambda(\frac{J}{(a_1,a_2)}) =\lambda(\frac{R}{I(B)}) \geq \lambda(\frac{R}{{\mathfrak m}^{r-1}}) = \binom{r}{2}.
$$
\end{proof}

Motivated by Proposition \ref{ineq}  and following the general philosophy that understanding the structure of extremal cases of inequalities is useful, we make the following definition.

\begin{definition}
An integrally closed $R$-module $M$ of rank $r$ with ideal of minors $I$ and without free direct summands is said to be extremal if
$$
\lambda(\frac{R}{I}) - \lambda(\frac{F}{M}) = \binom{r}{2}.
$$
\end{definition}

We will apply the next proposition in the proof of extremality of the modules $M_r(I)$.

\begin{proposition}\label{loc}
Let $(a,b)$ be an ${\mathfrak m}$-primary ideal of $R$ of order $n$ and let $2 \leq r \leq n$. Write $a = a_1x^{r-1} + a_2x^{r-2}y + \cdots + a_ry^{r-1}$ and similarly for $b$. Let $P$ be the parameter module generated by the columns of the matrix
$$
\left[
\begin{array}{ccrrrr}
a_1 & b_{1}    & y & 0 &   0 & 0\\
a_2 & b_{2}    & -x & y &   0 & 0\\
\vdots & \vdots &   & \ddots  & \ddots & \vdots\\
a_{r-1} & b_{r-1} &    0 & 0   & -x & y\\
a_{r} & b_{r} &   0 & 0   & 0 & -x
\end{array}
\right]. 
$$
Then $(a, b) \subseteq I(P)$ and 
$$\lambda \left(\frac{I(P)}{(a,b)}\right) = \binom{r}{2}.$$
\end{proposition}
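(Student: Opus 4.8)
The plan is to isolate a single length computation for $F/P$, where $F=R^{r}$ is the reflexive hull of $P$, and then feed it into the identity $\lambda(F/P)=\lambda(R/I(P))$ that holds for any parameter module (obtained above by dualising the Hilbert--Burch resolution). The inclusion $(a,b)\subseteq I(P)$ is immediate and is proved exactly as in Proposition \ref{fittingideals}: all entries of the displayed matrix lie in $\mathfrak m$ (the $a_i,b_i$ because $(a,b)\subseteq\mathfrak m^{r}$, as in the definition of $M_r(I)$), the $r\times r$ minor formed by the first column together with the last $r-1$ columns equals $a$ up to sign, and the one formed by the second column together with the last $r-1$ columns equals $b$ up to sign.

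The key step is to identify $F/P$ with $\mathfrak m^{r-1}/(a,b)$. Write $F=R^{r}$ with standard basis $e_1,\dots,e_r$, let $C$ be the $r\times(r-1)$ matrix of the last $r-1$ columns and $C'\subseteq F$ its column span, and let $v_1,v_2$ be the first two columns. The map $\psi\colon F\to R$ sending $e_i\mapsto x^{r-i}y^{i-1}$ has image $\mathfrak m^{r-1}$, and its kernel is the module of relations among $x^{r-1},\dots,y^{r-1}$, which is spanned by the $r-1$ vectors $ye_j-xe_{j+1}$ — that is, by the columns of $C$ — the very fact already used in Section \ref{prelimsec} to see that $M_r(I)$ is well defined. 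Hence $\psi$ induces an isomorphism $F/C'\xrightarrow{\sim}\mathfrak m^{r-1}$. Since $\psi(v_1)=a$, $\psi(v_2)=b$ and $P=C'+Rv_1+Rv_2$, this isomorphism carries $P/C'$ onto $(a,b)$, so $F/P\cong\mathfrak m^{r-1}/(a,b)$.

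To conclude, note that $(a,b)\subseteq\mathfrak m^{r-1}\subseteq R$ and $(a,b)\subseteq I(P)\subseteq R$ are inclusions of $\mathfrak m$-primary ideals, so additivity of length gives
\begin{align*}
\lambda\!\left(\frac{I(P)}{(a,b)}\right)
&=\lambda\!\left(\frac{R}{(a,b)}\right)-\lambda\!\left(\frac{R}{I(P)}\right)
=\lambda\!\left(\frac{R}{(a,b)}\right)-\lambda\!\left(\frac{F}{P}\right)\\
&=\lambda\!\left(\frac{R}{(a,b)}\right)-\lambda\!\left(\frac{\mathfrak m^{r-1}}{(a,b)}\right)
=\lambda\!\left(\frac{R}{\mathfrak m^{r-1}}\right)=\binom{r}{2},
\end{align*}
the second equality being the parameter-module identity and the last being $\lambda(R/\mathfrak m^{k})=\binom{k+1}{2}$ in a two-dimensional regular local ring. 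The only point that is not purely formal is the description of $\ker\psi$, namely that the ``consecutive'' syzygies $ye_j-xe_{j+1}$ generate all relations among $x^{r-1},\dots,y^{r-1}$; since this is exactly the fact invoked for $M_r(I)$, I expect no genuine obstacle. (Alternatively, paralleling the proof of Proposition \ref{ineq}, one could show that $C^{T}$ resolves $I(P)/(a,b)$ and then compute $\lambda(R/I_{r-1}(C))=\lambda(R/\mathfrak m^{r-1})$, but the route above sidesteps that homological bookkeeping.)
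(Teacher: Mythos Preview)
Your proof is correct and takes a genuinely different route from the paper's. The paper argues homologically on the side of $I(P)$: it observes that $P^{T}$ gives the Hilbert--Burch resolution of $R/I(P)$ with signed minors $(-1)^{r-1}b,(-1)^{r}a,c_1,\dots,c_{r-1}$, compares this with the Koszul resolution of $(a,b)$, and reads off that the submatrix $C^{T}$ obtained by deleting the first two rows of $P^{T}$ resolves $I(P)/(a,b)$; since $I_{r-1}(C)=\mathfrak m^{r-1}$, local duality gives the length $\binom{r}{2}$. You instead work on the side of $F/P$: the explicit surjection $\psi\colon F\twoheadrightarrow\mathfrak m^{r-1}$ with kernel exactly $C'$ yields $F/P\cong\mathfrak m^{r-1}/(a,b)$ directly, and then a single application of the parameter-module identity $\lambda(F/P)=\lambda(R/I(P))$ together with additivity of length finishes. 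Your argument is slightly more elementary in that it bypasses the resolution-comparison step and the second invocation of duality (for $C$ rather than $P$); the paper's approach has the advantage of making the module structure of $I(P)/(a,b)$ explicit, which is consonant with the method of Proposition~\ref{ineq}. The parenthetical alternative you sketch at the end is essentially the paper's proof.
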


\begin{proof}
The transposed matrix $P^T$ resolves the ideal $I(P) = ((-1)^{r-1}b, (-1)^ra,c_1,\cdots,c_{r-1})$ for some $c_1,\cdots,c_{r-1}$. 
Comparing with the Koszul resolution of $((-1)^{r-1}b, (-1)^ra)$ as in the proof of Proposition \ref{ineq},  the quotient module
$\frac{I(P)}{(a,b)}$ is
resolved by the $ (r-1) \times r$ matrix
$$
\left[
\begin{array}{ccccc}
  y & -x &   0 & \cdots & 0\\
 0 & y &   -x &  \cdots & 0\\
\vdots & \vdots & \ddots  & \ddots  & \vdots\\
0 & 0   & \cdots &y & -x\\
0 & 0   & \cdots & 0 & y
\end{array}
\right]
$$
obtained by dropping the first two rows of the transposed matrix.
The ideal of minors of this parameter matrix is ${\mathfrak m}^{r-1}$. So the quotient is isomorphic to $Ext^2(\frac{R}{{\mathfrak m}^{r-1}},R)$ and hence has length $\binom{r}{2}$.
\end{proof}

We can now give a large class of examples of extremal modules.

\begin{proposition}\label{isextr} Let $M = M_r(I)$ for an integrally closed ${\mathfrak m}$-primary ideal $I$ with $ord(I) = n \geq r$.
Then, $M$ is extremal.
\end{proposition}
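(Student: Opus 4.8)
The plan is to deduce the extremality equation from the length-multiplicity identity, which applies since $M=M_r(I)$ is integrally closed with ideal of minors $I$ by Propositions~\ref{isic} and~\ref{fittingideals}, and has no free direct summands (all entries of its defining matrix lie in ${\mathfrak m}$, so $M\subseteq{\mathfrak m}F$). That identity reads
$$
\lambda(R/I)-\lambda(F/M)=e(I)-e(M),
$$
so it suffices to compute $e(I)$ and $e(M)$ via a well-chosen minimal reduction of $M$. Since the residue field is infinite, I would first arrange the minimal generating set $a_1,\dots,a_{n+1}$ of $I$ used in the defining matrix $(\ast)$ of $M_r(I)$ so that, in addition, $a_1,a_2$ form a reduction of $I$. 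Let $P$ be the parameter module of Proposition~\ref{loc} attached to the pair $a_1,a_2$, using the coefficients appearing in the first two columns of $(\ast)$. By construction $P$ is precisely the submodule of $M_r(I)$ spanned by the first two columns and the last $r-1$ columns of $(\ast)$, so $P\subseteq M$.

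The heart of the argument is to show that $P$ is in fact a reduction of $M$, that is, $M\subseteq\overline P$. First, $I(P)\subseteq I_r(M)=I$ since $P\subseteq M$, while $(a_1,a_2)\subseteq I(P)$ by Proposition~\ref{loc}; as $(a_1,a_2)$ is a reduction of the integrally closed ideal $I$, this forces $\overline{I(P)}=I$. Now I would invoke the determinantal criterion, mimicking the computation in the proof of Proposition~\ref{fittingideals}: given a generator column $v$ of $(\ast)$ and any $r\times r$ matrix $W$ whose first column is $v$ and whose remaining $r-1$ columns lie in $P$, the matrix identity from that proof shows every entry of $[\,x^{r-1}\ \cdots\ y^{r-1}\,]W$ lies in $I$; right-multiplying by $adj(W)$ and using that all entries of $W$ — hence all entries of $adj(W)$ — lie in ${\mathfrak m}$ respectively ${\mathfrak m}^{r-1}$, one obtains $\det(W)\,{\mathfrak m}^{r-1}\subseteq{\mathfrak m}^{r-1}I$, so $\det(W)\in({\mathfrak m}^{r-1}I:{\mathfrak m}^{r-1})=I=\overline{I(P)}$. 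Hence every generator column $v$ of $(\ast)$ lies in $\overline P$, and since those columns together with $P$ generate $M$, we conclude $M\subseteq\overline P$. As $P$ is a parameter module, it is then a minimal reduction of $M$.

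It remains to assemble the multiplicities. Since $P$ is a minimal reduction of $M$ and is itself a parameter module, $e(M)=\lambda(F/P)=\lambda(R/I(P))$. Since $a_1,a_2$ form a two-generated reduction of the ${\mathfrak m}$-primary ideal $I$, the ideal $(a_1,a_2)$ is a parameter ideal with $e(I)=e((a_1,a_2))=\lambda(R/(a_1,a_2))$. Feeding these into the length-multiplicity identity and using $(a_1,a_2)\subseteq I(P)$ together with Proposition~\ref{loc},
$$
\lambda(R/I)-\lambda(F/M)=\lambda(R/(a_1,a_2))-\lambda(R/I(P))=\lambda\left(\frac{I(P)}{(a_1,a_2)}\right)=\binom{r}{2},
$$
as desired.

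The only step demanding genuine work is the second paragraph, where the determinantal criterion is used to show that $M$ is integral over the parameter submodule $P$; everything else is bookkeeping with multiplicities of parameter ideals and parameter modules. A small point to note: Proposition~\ref{loc} is phrased for an ideal of order exactly $n$, whereas here one only knows $ord((a_1,a_2))\geq ord(I)=n\geq r$; but its proof uses only the inequality $r\leq ord((a_1,a_2))$, so it applies as stated.
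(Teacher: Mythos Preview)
Your proof is correct and follows the same line as the paper's: reduce via the length-multiplicity identity to computing $e(I)-e(M)$, take $P$ to be the submodule spanned by the first two and last $r-1$ columns of $(\ast)$ (with $a_1,a_2$ chosen to be a reduction of $I$), verify that $P$ is a reduction of $M$, and finish with Proposition~\ref{loc}. The only difference is that your second paragraph does more work than necessary: since the columns of $W$ all lie in $M$, one has $\det W\in I_r(M)=I=\overline{I(P)}$ immediately, so the determinantal criterion yields $M\subseteq\overline P$ in one line without re-running the adjugate computation from Proposition~\ref{fittingideals}.
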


\begin{proof} First, $M$ is integrally closed by Proposition \ref{isic}. Next, by construction $M \subseteq {\mathfrak m}F$ and hence does not have free direct summands. Then, by the length-multiplicity identity stated above, 
it suffices to see that $e(I) - e(M) = \binom{r}{2}$ to see that $M$ is extremal.

Let $a_1,a_2$ be a minimal reduction of $I$ and extend it to a minimal generating set $a_1,\cdots,a_{n+1}$ of $I$.
Consider $M=M_r(I)$ constructed from this generating set.
Let $P$ be the submodule of $M$ generated by the first two columns - that correspond to the generators $a_1,a_2$ of $I$ - together with the last $r-1$ columns. Then in $I(P)$ we have $a_1$ and $a_2$ so $I(P)$ is a reduction of $I$ and so $P$ is a reduction of $M$ by the determinantal criterion.

Hence, $e(M) = \lambda(\frac{F}{P}) = \lambda(\frac{R}{I(P)})$ while $e(I) = \lambda(\frac{R}{(a_1,a_2)})$. Therefore $e(I) - e(M)$ equals $\lambda(\frac{I(P)}{(a_1,a_2)}) = \binom{r}{2}$, by Proposition \ref{loc}.
\end{proof}

Rather interestingly, it turns out that Proposition \ref{isextr} not only gives a large class of examples of extremal modules, it actually gives all of them. Equivalently, the $M_r(I)$ are the only examples of extremal modules. This is a numerical-structural type result of the type $M$ is contracted iff $\mu(M) = ord(M)+rk(M)$, in which this theory abounds.

\begin{proposition} \label{extrcharac}
Let $M$ be an extremal module of rank $r$ with ideal of minors $I$. Then $M$ is isomorphic to $M_r(I)$.
In particular $I_k(M) = {\mathfrak m}^{k}$ for $k = 1,2,\cdots,r-1$.
\end{proposition}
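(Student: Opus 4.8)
The plan is to start from an extremal module $M$ of rank $r$ with $F = M^{**} \cong R^r$ and produce, via invertible matrix operations, a presentation matrix in exactly the shape $(\ref{spmod})$. Since $M$ is extremal, and in particular integrally closed without free direct summands, it sits inside $\mathfrak m F$, so every entry of any presentation matrix lies in $\mathfrak m$. By Proposition \ref{genposi} I would choose a minimal reduction $N$ of $M$ whose Hilbert--Burch ideal $J = (a_1,\ldots,a_{r+1})$ has $a_1,a_2$ as a two-generated reduction. The central observation is the chain of equalities already extracted in the proof of Proposition \ref{ineq}: extremality forces
$$
\binom{r}{2} = \lambda\!\left(\tfrac{R}{I}\right) - \lambda\!\left(\tfrac{F}{M}\right) = e(J) - \lambda\!\left(\tfrac{R}{J}\right) = \lambda\!\left(\tfrac{J}{(a_1,a_2)}\right) = \lambda\!\left(\tfrac{R}{I(B)}\right),
$$
where $B^T$ is the $(r-1)\times r$ parameter matrix obtained by deleting the first two rows of $N^T$ (equivalently, $B$ is the submatrix of $N$ consisting of its last $r-1$ columns). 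Since $I(B) \subseteq \mathfrak m^{r-1}$ and $\lambda(R/\mathfrak m^{r-1}) = \binom{r}{2}$, equality of lengths gives $I(B) = \mathfrak m^{r-1}$.

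Next I would analyse what $I(B) = \mathfrak m^{r-1}$ says about the $r \times (r-1)$ matrix $B$. This is the structural heart of the argument: an $r \times (r-1)$ matrix of elements of $\mathfrak m$ whose ideal of maximal minors is exactly $\mathfrak m^{r-1}$ must, after left multiplication by an element of $GL_r(R)$ and right multiplication by an element of $GL_{r-1}(R)$, be the ``Koszul-type'' matrix with $y$ on the diagonal and $-x$ on the subdiagonal appearing in the last $r-1$ columns of $(\ref{spmod})$. I expect to prove this by induction on $r$, or else by recognising $B^T$ as a parameter matrix resolving a quotient isomorphic to $\mathrm{Ext}^2(R/\mathfrak m^{r-1},R)$ and invoking the rigidity of the Hilbert--Burch matrix (uniqueness up to the appropriate $GL$-actions) together with the fact that the minimal free resolution of $R/\mathfrak m^{r-1}$ is the Eagon--Northcott complex, whose second syzygy matrix is precisely that bidiagonal $[y,-x]$ matrix up to $GL$-equivalence. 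Applying these invertible matrices to $N$ replaces $M$ by an isomorphic module, so we may assume the last $r-1$ columns of the presentation matrix of $M$ already have this standard form.

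With the last $r-1$ columns fixed in standard shape, I would write the remaining generators of $M$ as columns $c_1,\ldots,c_s$ (where $s = \mu(M) - (r-1)$), each with entries in $\mathfrak m$, and expand each entry against $x^{r-1}, x^{r-2}y, \ldots, y^{r-1}$ — legitimate since every entry lies in $\mathfrak m = (x,y)$, hence is an $R$-linear combination of... well, of $x$ and $y$; the point is rather that the single element $[x^{r-1}\ \cdots\ y^{r-1}]\cdot c_j$ of $\mathfrak m^{r-1}$ can be adjusted modulo the column span of the standard last $r-1$ columns, which exactly span the syzygies of $x^{r-1},\ldots,y^{r-1}$. Using $I(M) = I$ (valid here because extremality plus Proposition \ref{fittingideals}-type reasoning, or directly: the $r\times r$ minor of $c_j$ with the last $r-1$ columns is $\pm([x^{r-1}\cdots y^{r-1}]c_j)$, so these entries lie in $I$, and conversely a general-position/column-reduction argument shows we can take them to be a minimal generating set of $I$) I would arrange, by column operations among the $c_j$ and subtraction of multiples of the last $r-1$ columns, that the column $c_j$ is the vector of coefficients $(a_{1,j},\ldots,a_{r,j})^T$ expressing the $j$th generator $a_j$ of $I$, and that $s = \mu(I) = n+1$. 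This produces exactly the matrix $(\ref{spmod})$, so $M \cong M_r(I)$; the final clause $I_k(M) = \mathfrak m^k$ for $k = 1,\ldots,r-1$ is then immediate from Proposition \ref{fittingideals}.

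The main obstacle I anticipate is the structural classification in the second step — proving that an $r\times(r-1)$ matrix over $\mathfrak m$ with maximal-minor ideal exactly $\mathfrak m^{r-1}$ is $GL_r(R) \times GL_{r-1}(R)$-equivalent to the standard bidiagonal one. Getting the bookkeeping of the two $GL$-actions right, and making sure the normalisation of the last $r-1$ columns in step two does not get disturbed when normalising the first columns in step three (one may need to only use column operations and lower-triangular adjustments that preserve the fixed block), is where the real care is needed; everything else is the determinantal criterion plus linear algebra already rehearsed in Propositions \ref{ineq}, \ref{loc}, and \ref{isextr}.
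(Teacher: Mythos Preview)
Your first three paragraphs align with the paper's proof: choose $N$ via Proposition~\ref{genposi}, extract $I(B)=\mathfrak m^{r-1}$ from extremality, and normalise $B$. The normalisation of $B$ is simpler than you fear, though. Since $B$ is an $r\times(r-1)$ matrix with entries in $\mathfrak m$ and $I(B)=\mathfrak m^{r-1}$, the Hilbert--Burch theorem says $B$ minimally resolves $\mathfrak m^{r-1}$. Uniqueness of minimal resolutions (exactly the manoeuvre of Proposition~\ref{genposi}) gives $Q\in GL_r(R)$ with $QB$ spanning the same column module as the standard bidiagonal matrix, and then column operations on the last $r-1$ columns of $N$ finish the job. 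No induction or Eagon--Northcott needed; both the left $GL_r$-action and the column operations preserve the isomorphism class of $\overline N=M$.

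The genuine divergence is your final step, and it has a gap. After normalising, you want to extend the $r-1$ standard columns to a minimal generating set of $M$ by columns $c_1,\dots,c_{n+1}$ and claim that the elements $d_j=[x^{r-1}\ \cdots\ y^{r-1}]c_j$ form a minimal generating set of $I$. You do get $d_j\in I$, and minimality of the $c_j$ forces the $d_j$ to minimally generate the ideal $I'$ they span; since $N\subseteq M$ you also get $(a_1,a_2)\subseteq I'$, hence $\overline{I'}=I$. But nothing you have written forces $I'=I$: the adjugate trick of Proposition~\ref{fittingideals} only yields $\mathfrak m^{r-1}I=\mathfrak m^{r-1}I'$, and cancelling $\mathfrak m^{r-1}$ returns you to $I=\overline{I'}$, not $I=I'$. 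Your parenthetical appeal to ``extremality plus Proposition~\ref{fittingideals}-type reasoning'' does not close this.

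The paper avoids the issue by not manipulating $M$ at all beyond normalising $N$. Once $N$ has standard last $r-1$ columns and first two columns lifting $a_1,a_2$, extend $a_1,a_2$ to a minimal generating set $a_1,\dots,a_{n+1}$ of $I$ and build $M_r(I)$ from this generating set, taking the first two lifts to coincide with the first two columns of $N$. Then $N\subseteq M_r(I)$; by Proposition~\ref{isic} the module $M_r(I)$ is integrally closed; and $I(M_r(I))=I$ is integral over $(a_1,a_2)\subseteq I(N)$, so $M_r(I)$ is integral over $N$ by the determinantal criterion. Hence $M_r(I)=\overline N=M$. This is cleaner and sidesteps the $I'=I$ question entirely.
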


\begin{proof} As in the proof of Proposition \ref{ineq}, choose a minimal reduction $N$ of $M$ such that if $N^T$ resolves the ideal $J = (a_1,a_2,\cdots,a_{r+1})$, then $a_1,a_2$ form a reduction of $J$. Since $M \subseteq {\mathfrak m}F$, $J$ is minimally $r+1$ generated. Now we're given that
$$
\lambda(\frac{R}{I}) - \lambda(\frac{F}{M}) = e(I) - e(M) = e(J) - \lambda(\frac{F}{N}) = e(J) - \lambda(\frac{R}{J}) = \binom{r}{2}.
$$
Let $B$ be the submatrix of $N$ of the last $r-1$ columns so that $B^T$ resolves $\frac{J}{(a_1,a_2)}$
whose length is $\binom{r}{2}$. So $\lambda(\frac{R}{I(B)}) = \binom{r}{2}$. But since $I(B) \subseteq {\mathfrak m}^{r-1}$, equality must hold and $I(B) = {\mathfrak m}^{r-1}$. So $B$ resolves ${\mathfrak m}^{r-1}$ minimally. Now, by premultiplying $B$ by a suitable matrix $Q \in GL_r(R)$ as in  Proposition \ref{genposi}, we may get it to the form
$$
\left[
\begin{array}{rrrr}
y & 0 &   0 & 0\\
-x & y &   0 & 0\\
 \vdots & \ddots  & \ddots & \vdots\\
 0 & 0   & -x & y\\
0 & 0   & 0 & -x
\end{array}
\right].
$$
Premultiplying  $N$ by $Q$ - which does not change the isomorphism class of $M= \overline{N}$ -  we may assume that $N$ has the form
$$
\left[
\begin{array}{ccrrrr}
a_{1,1} & a_{1,2}    & y & 0 &   0 & 0\\
a_{2,1} & a_{2,2}    & -x & y &   0 & 0\\
\vdots & \vdots &   & \ddots  & \ddots & \vdots\\
a_{r-1,1} & a_{r-1,2} &    0 & 0   & -x & y\\
a_{r,1} & a_{r,2} &   0 & 0   & 0 & -x
\end{array}
\right].
$$
Note that the first two minors have only changed up to a multiplicative unit - $det(Q)$.  So we may assume that the first two minors are still $a_1,a_2$.
Now $M$ is the integral closure of the matrix above.

Extend $a_1,a_2$ to a minimal generating set of $I$ say $a_1,\cdots,a_{n+1}$ and consider $M_r(I)$ constructed from these generators with the first two being lifted as above. Then $M_r(I)$ is integrally closed by Proposition \ref{isic} and is integral over $N$ since the ideal of minors of $M_r(I)$ which is $I$ is integral over $(a_1,a_2) \in I(N)$. Thus $M_r(I)$ is the
integral closure of $N$ and hence equals $M$. The last assertion follows from Proposition \ref{fittingideals}.
\end{proof}

One consequence of this is that, for instance, there is a unique extremal module of rank $r$ with ideal of minors $I$ 
for every integrally closed ideal $I$ of order at least $r$.
There are rather tight restrictions on how an extremal module may decompose as a direct sum which will help us prove indecomposability in many cases.

\begin{proposition}\label{tight} Let $M$ be extremal of rank $r$ with $I(M) = I$ having order $n$. Suppose that $M$ decomposes as
$P_1 \oplus P_2$  where $rk(P_i) = r_i$ and $I(P_i) = I_i$ of order $n_i$. Then,
\begin{enumerate}
\item $n_1 = r_1, n_2 = r_2$ and so, $n=r$.
\item $P_1$ and $P_2$ are extremal.
\item $\lambda(\frac{R}{I}) = \lambda(\frac{R}{I_1}) + \lambda(\frac{R}{I_2}) + r_1r_2$.
\item ${\mathfrak m}^{r-1} = {\mathfrak m}^{r_2-1}I_1 + {\mathfrak m}^{r_1-1}I_2.$
\end{enumerate}
\end{proposition}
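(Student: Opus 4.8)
The plan is to obtain all four assertions from three ingredients: multiplicativity of the ideal of maximal minors under direct sums, Proposition~\ref{ineq} applied to $P_1$ and $P_2$, and a single length inequality for products of complete ideals. First I would make the easy reductions. Since $M$ is extremal it has no free direct summand, hence $M\subseteq\mathfrak m F$, so $P_1,P_2\subseteq\mathfrak m F_i$ and neither $P_i$ has a free direct summand; and each $P_i$, being a direct summand of the integrally closed module $M$, is integrally closed, so $I_i=I(P_i)$ is an integrally closed $\mathfrak m$-primary ideal (using that the ideal of maximal minors of an integrally closed module is again integrally closed, cf.\ \cite{Kdy1995}). Also $F=F_1\oplus F_2$ with $F_i=P_i^{**}$, so $\lambda(F/M)=\lambda(F_1/P_1)+\lambda(F_2/P_2)$. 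Representing $M=P_1\oplus P_2$ by a block-diagonal matrix, every nonzero $r\times r$ minor of it is a product of an $r_1\times r_1$ minor of the $P_1$-block and an $r_2\times r_2$ minor of the $P_2$-block, so $I=I(M)=I_1I_2$; since the $\mathfrak m$-adic order is a valuation this gives $n=n_1+n_2$, and $P_i\subseteq\mathfrak m F_i$ forces $n_i\geq r_i$.

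Next I would record two length facts. The first holds for any $\mathfrak m$-primary ideal $I_1$: tensoring the free resolution (of length one) $0\to R^{\mu(I_1)-1}\to R^{\mu(I_1)}\to I_1\to 0$ with $R/I_2$ and using additivity of length gives
$$\lambda(R/I_1I_2)=\lambda(R/I_1)+\lambda(R/I_2)+\lambda\bigl(Tor_1^R(I_1,R/I_2)\bigr).$$
The second, which is the crux, holds for integrally closed $\mathfrak m$-primary $I_1,I_2$ of orders $n_1,n_2$:
$$\lambda\bigl(Tor_1^R(I_1,R/I_2)\bigr)\ \geq\ n_1n_2.$$
I would deduce it from Zariski's theory of complete ideals: the Hoskin--Deligne formula $\lambda(R/J)=\sum_p\binom{ord_p(J)+1}{2}$ (the sum over infinitely near base points, $ord_p$ the multiplicity of the point transform), the additivity $ord_p(I_1I_2)=ord_p(I_1)+ord_p(I_2)$, and $\binom{a+b+1}{2}-\binom{a+1}{2}-\binom{b+1}{2}=ab$ turn the displayed equality into $\lambda(Tor_1^R(I_1,R/I_2))=\sum_p ord_p(I_1)ord_p(I_2)\geq ord_{\mathfrak m}(I_1)ord_{\mathfrak m}(I_2)=n_1n_2$, all summands being non-negative. (Equivalently, $\lambda(R/I_1I_2)-\lambda(R/I_1)-\lambda(R/I_2)$ is Zariski's intersection number $(I_1\cdot I_2)$, which dominates $ord(I_1)ord(I_2)$.)

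With these in hand, extremality of $M$ together with the displayed identity and the splitting of $\lambda(F/M)$ gives
$$\binom{r}{2}=\bigl[\lambda(R/I_1)-\lambda(F_1/P_1)\bigr]+\bigl[\lambda(R/I_2)-\lambda(F_2/P_2)\bigr]+\lambda\bigl(Tor_1^R(I_1,R/I_2)\bigr)\ \geq\ \binom{r_1}{2}+\binom{r_2}{2}+n_1n_2\ \geq\ \binom{r_1}{2}+\binom{r_2}{2}+r_1r_2=\binom{r}{2},$$
the first inequality by Proposition~\ref{ineq} for $P_1,P_2$ and the $Tor$ inequality, the second by $n_i\geq r_i$, and the last equality by $\binom{r_1}{2}+\binom{r_2}{2}+r_1r_2=\binom{r_1+r_2}{2}$. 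Hence all inequalities are equalities: equality in Proposition~\ref{ineq} for each $P_i$ says $P_i$ is extremal, which is (2); $n_1n_2=r_1r_2$ with $n_i\geq r_i\geq 1$ forces $n_1=r_1$, $n_2=r_2$, hence $n=r$, which is (1); and $\lambda(Tor_1^R(I_1,R/I_2))=r_1r_2$, substituted back into the identity, is (3).

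For (4) I would use the standard formula $I_k(P_1\oplus P_2)=\sum_{i+j=k}I_i(P_1)I_j(P_2)$ for Fitting ideals of a direct sum. For $k=r-1$ the only terms with $i\leq r_1$ and $j\leq r_2$ are $(i,j)=(r_1-1,r_2)$ and $(r_1,r_2-1)$, so $I_{r-1}(M)=I_{r_1-1}(P_1)I_2+I_1\,I_{r_2-1}(P_2)$. Since $M$ is extremal, Proposition~\ref{extrcharac} gives $I_{r-1}(M)=\mathfrak m^{r-1}$, and since each $P_i$ is extremal by (2) the same proposition gives $I_{r_i-1}(P_i)=\mathfrak m^{r_i-1}$ (trivially if $r_i=1$). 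Substituting yields $\mathfrak m^{r-1}=\mathfrak m^{r_1-1}I_2+\mathfrak m^{r_2-1}I_1$, which is (4). The one substantive obstacle is the inequality $\lambda(Tor_1^R(I_1,R/I_2))\geq n_1n_2$; I expect the cleanest route is the Hoskin--Deligne argument above, but one could instead attempt a direct proof, e.g.\ by comparing $I_1I_2$ with $\mathfrak m^{n_1}I_2$, for which $\lambda(Tor_1^R(\mathfrak m^{n_1},R/I_2))$ works out to be exactly $n_1n_2$.
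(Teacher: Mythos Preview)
Your proof is correct and follows essentially the same route as the paper's: both split the extremality equation into the two Proposition~\ref{ineq} terms for $P_1,P_2$, the Hoskin--Deligne term $\lambda(R/I_1I_2)-\lambda(R/I_1)-\lambda(R/I_2)-n_1n_2\geq 0$, and the term $n_1n_2-r_1r_2\geq 0$, and then read off (1)--(3) from the vanishing of each; part (4) is handled identically via the Fitting-ideal formula for direct sums together with Proposition~\ref{extrcharac}. Your only deviation is cosmetic: you package $\lambda(R/I_1I_2)-\lambda(R/I_1)-\lambda(R/I_2)$ as $\lambda\bigl(Tor_1^R(I_1,R/I_2)\bigr)$ before invoking Hoskin--Deligne, whereas the paper applies Hoskin--Deligne directly (and in the form already stated just after the proposition).
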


We note that the condition (3) in the above proposition is also an extremal case of an inequality. For any two integrally closed ${\mathfrak m}$-primary ideals $I_1$ and $I_2$ with $ord(I_1) = n_1, ord(I_2) = n_2$, the Hoskin-Deligne length formula
- see Theorem 3.1 of \cite{Lpm1988} for instance - 
implies that
$$\lambda(\frac{R}{I_1I_2}) - \lambda(\frac{R}{I_1}) - \lambda(\frac{R}{I_2}) = \sum_{T} ord_T(I_1^T)ord_T(I_2^T)[T:R] = n_1n_2 + \cdots
$$ where the sum is over all quadratic transforms $T$ of $R$ and $[T:R]$ denotes the degree of the residue field extension of $T$ over $R$. The $n_1n_2$ term comes from $T = R$. So for condition (3) to hold,
 $I_1$ and $I_2$ must not have a common base-point other than $R$ itself. This is a
necessary and sufficient condition and is equivalent to saying that there is no first quadratic transform $T$ of $R$ in which both transforms
$I_1^T$ and $I_2^T$ are proper ideals.

\begin{proof}[Proof of Proposition \ref{tight}] Note that $r = r_1+r_2$, $I=I_1I_2$ and $n = n_1+n_2$. Also, because
$M$ does not have a free direct summand, neither do $P_1$ and $P_2$, and hence $n_1 \geq r_1$ and $n_2 \geq r_2$. Let $F_i = P_i^{**}$ so that $M^{**} = F = F_1 \oplus F_2$. By extremality of $M$,
$$
\lambda(\frac{R}{I}) - \lambda(\frac{F}{M}) - \binom{r}{2} = 0.
$$
Hence,
$$
\lambda(\frac{R}{I_1I_2}) - \lambda(\frac{F_1}{P_1}) - \lambda(\frac{F_2}{P_2}) - \binom{r_1+r_2}{2}  = 0.
$$
Rewrite this as:
\begin{eqnarray*}
\left\{  \lambda(\frac{R}{I_1I_2}) - \lambda(\frac{R}{I_1}) - \lambda(\frac{R}{I_2}) - n_1n_2 \right\} &+& \left\{  \lambda(\frac{R}{I_1}) - \lambda(\frac{F_1}{P_1}) -  \binom{r_1}{2}  \right\}\\
 + \left\{ \lambda(\frac{R}{I_2}) - \lambda(\frac{F_2}{P_2}) -  \binom{r_2}{2} \right\} &+& \left\{n_1n_2 - r_1r_2\right\} = 0
\end{eqnarray*}
Since all the bracketed terms are non-negative, they all must vanish. So we deduce
\begin{enumerate}
\item $n_1n_2 = r_1r_2$ which implies that $n_1 = r_1, n_2 = r_2$ and $n = n_1+n_2 = r_1+r_2 = r$.
\item $P_1$ and $P_2$ are extremal from the vanishing of the second and third bracketed terms.
\item $\lambda(\frac{R}{I_1I_2}) = \lambda(\frac{R}{I_1}) + \lambda(\frac{R}{I_2}) + n_1n_2  = \lambda(\frac{R}{I_1}) + \lambda(\frac{R}{I_2}) + r_1r_2$.
\item Since $M$ decomposes as $P_1 \oplus P_2$, 
\begin{eqnarray*}
I_{r-1}(M) &=& I_{r_2-1}(P_2)I_{r_1}(P_1)+I_{r_1-1}(P_1)I_{r_2}(P_2) {\text {~and so~}}\\
{\mathfrak m}^{r-1} &=& {\mathfrak m}^{r_2-1}I_1 + {\mathfrak m}^{r_1-1}I_2,
\end{eqnarray*}
\end{enumerate}
where the last equation follows using Proposition \ref{extrcharac}.
\end{proof}

Our main existence theorem now follows easily.

\begin{theorem}\label{existence} Let $I$ be an integrally closed ${\mathfrak m}$-primary ideal of order $n$. If one of the following two conditions hold, 
\begin{enumerate}
\item $n > r$.
\item $n=r$ and there do not exist ${\mathfrak m}$-primary integrally closed ideals $I_1$ and $I_2$ of orders $r_1$ and $r_2$ respectively such that
$I= I_1I_2$ and both the conditions below hold:
\begin{enumerate}
\item $\lambda(\frac{R}{I_1I_2}) = \lambda(\frac{R}{I_1}) + \lambda(\frac{R}{I_2}) + r_1r_2$, and
\item ${\mathfrak m}^{r-1} = {\mathfrak m}^{r_2-1}I_1 + {\mathfrak m}^{r_1-1}I_2$,
\end{enumerate}
\end{enumerate}
then $I=I(M)$ for an indecomposable integrally closed module $M$ of rank $r$.
\end{theorem}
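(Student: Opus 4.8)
The plan is to take $M := M_r(I)$, the module constructed in Section~\ref{prelimsec}, and verify it has all the required properties. Both hypotheses (1) and (2) force $n \geq r$, so $M_r(I)$ is defined, and nearly everything is already in hand: $M$ is integrally closed by Proposition~\ref{isic}, $I(M) = I$ by Proposition~\ref{fittingideals}, $rk(M) = r$ by construction, $M \subseteq {\mathfrak m}F$ so $M$ has no free direct summand, and $M$ is extremal by Proposition~\ref{isextr}. Thus the only remaining point is to show that $M$ is indecomposable.

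I would prove indecomposability by contradiction, feeding a hypothetical decomposition into the rigidity statement of Proposition~\ref{tight}. Suppose $M = P_1 \oplus P_2$ with $r_i := rk(P_i) \geq 1$. Since $M \subseteq {\mathfrak m}F$ has no free direct summand, neither does $P_i$, so $I_i := I(P_i)$ is ${\mathfrak m}$-primary; and since $P_i$ is a direct summand of the integrally closed module $M$ it is itself integrally closed, whence $I_i$ is an integrally closed ideal (the ideal of minors of an integrally closed module is integrally closed --- see \cite{Kdy1995}). Let $n_i = ord(I_i)$. Proposition~\ref{tight} then gives $n_i = r_i$ (hence $n = r_1 + r_2 = r$), $I = I_1 I_2$, $\lambda(\frac{R}{I}) = \lambda(\frac{R}{I_1}) + \lambda(\frac{R}{I_2}) + r_1 r_2$, and ${\mathfrak m}^{r-1} = {\mathfrak m}^{r_2-1}I_1 + {\mathfrak m}^{r_1-1}I_2$.

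Now I split on the hypotheses. Under hypothesis (1), the equality $n = r$ just derived contradicts $n > r$, so no decomposition can exist. Under hypothesis (2), the ideals $I_1$ and $I_2$ produced above are ${\mathfrak m}$-primary integrally closed ideals of orders $r_1$ and $r_2$ with $I = I_1 I_2$, and they satisfy precisely the conditions (a) and (b) that (2) forbids; this is a contradiction, so again no decomposition can exist. In either case $M = M_r(I)$ is an indecomposable integrally closed module of rank $r$ with $I(M) = I$, as wanted.

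I do not expect a genuine obstacle here: Propositions~\ref{fittingideals}, \ref{isic}, \ref{isextr}, and above all the decomposition-rigidity Proposition~\ref{tight}, were assembled for exactly this purpose, so the argument is essentially bookkeeping. The one point needing a little care is verifying that the summands $P_i$ of a hypothetical splitting yield genuinely \emph{integrally closed} ideals $I_i$, so that the pair $(I_1,I_2)$ falls under the configuration excluded by hypothesis (2); this rests on the standard fact that direct summands of integrally closed modules stay integrally closed and that the ideal of minors of an integrally closed module is integrally closed.
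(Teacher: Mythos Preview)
Your proof is correct and follows essentially the same route as the paper: take $M = M_r(I)$, note it is extremal with $I(M)=I$, and feed any putative decomposition into Proposition~\ref{tight} to force $n=r$ together with conditions (a) and (b), contradicting the hypothesis. Your extra care in checking that the summands $P_i$ and their Fitting ideals $I_i$ are integrally closed (so that the configuration excluded by (2) genuinely applies) is a point the paper leaves implicit but which is indeed needed.
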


\begin{proof} 
Let $M = M_r(I)$ which is an extremal module by Proposition \ref{isextr} and satisfying $I(M) = I$ by Proposition \ref{fittingideals}. If $M$ decomposes as $P_1 \oplus P_2$ where $rk(P_i) = r_i$ and $I(P_i) = I_i$ of order $n_i$, then Proposition \ref{tight} implies that $n=r$, $I=I_1I_2$ with $n_1 = r_1, n_2 = r_2$ and both the conditions (a) and (b) of (2) hold. It follows that if (1) or  (2) holds for $I$, then $M$ is necessarily indecomposable.
\end{proof}

\begin{example}
{\rm 
Let $I=(x,y^{m_1})(x,y^{m_2}) \cdots (x,y^{m_r})$ where $2 \leq m_1 \leq m_2 \leq \cdots \leq m_r$ and $r \geq 2$. 
Let $I_1$ and $I_2$ be $\fkm$-primary integrally closed ideals of orders $r_1$ and $r_2$ 
respectively such that $I=I_1I_2$. Note that $I_1$ and $I_2$ are also monomial ideals 
by Zariski's unique factorisation theorem. 
It is easy to see that $y^{r-1} \notin \fkm^{r_2-1} I_1+\fkm^{r_1-1} I_2$, 
so the condition (2)(b) in Theorem \ref{existence} is not satisfied. 
Hence, $I=I(M)$ for an indecomposable integrally closed module $M$ of rank $r$. 
}
\end{example}

\section{Non-existence results}\label{nonexissec}

The rest of the paper deals with non-existence results which will yield a converse to Theorem \ref{existence} in the rank 3 case. We begin by analysing the conditions (a) and (b) of Theorem \ref{existence}(2) in terms of joint reductions - see \cite{Res1984} for the definition.
We will need the main result of \cite{Vrm1990} which asserts that for integrally closed ${\mathfrak m}$-primary ideals $J$ and $K$ of $R$, there exist $a \in J, b \in K$ such that $JK = aK+bJ$ and that in fact, this holds for any joint reduction
$a,b$ of $J,K$. Our first result may be regarded as a more structural characterisation of the numerical equality in
Theorem \ref{existence}(2)(a)

\begin{proposition}\label{jtred}
Let $J,K$ be ${\mathfrak m}$-primary integrally closed  ideals of $R$ of orders $s$ and $t$ respectively and let $I = JK$ with order $r=s+t$.
The following two conditions are equivalent:
\begin{enumerate}
\item The conditions below hold for $J$ and $K$.
\begin{enumerate}
\item $\lambda(\frac{R}{JK}) = \lambda(\frac{R}{J}) + \lambda(\frac{R}{K}) + st$, and
\item ${\mathfrak m}^{r-1} = {\mathfrak m}^{t-1}J + {\mathfrak m}^{s-1}K$,
\end{enumerate}
\item For any joint reduction $a,b$ of $J,K$, 
\begin{enumerate}
\item $JK = aK + bJ$, and
\item  ${\mathfrak m}^{r-1} = a{\mathfrak m}^{t-1} + b{\mathfrak m}^{s-1}$.
\end{enumerate}
\end{enumerate}
\end{proposition}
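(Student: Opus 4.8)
\medskip
\noindent\textbf{Proof strategy.} First note that $(2)(a)$ holds for \emph{every} joint reduction $a,b$ of $J,K$: this is precisely the result of \cite{Vrm1990} recalled above. Moreover $(2)(b)$ forces $(1)(b)$ at once, since $a\in J$ and $b\in K$ give $\mathfrak m^{r-1}=a\mathfrak m^{t-1}+b\mathfrak m^{s-1}\subseteq\mathfrak m^{t-1}J+\mathfrak m^{s-1}K\subseteq\mathfrak m^{r-1}$. So the proposition reduces to the equivalence of $(1)(a)$ with $(2)(b)$, and the plan is to route both conditions through the single numerical statement $\lambda(R/(a,b))=st$ --- the ideal $(a,b)$ being automatically $\mathfrak m$-primary, as it contains $aK+bJ=JK$.

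\emph{Step 1: a length identity.} For any joint reduction $a,b$ of $J,K$,
\[
\lambda(R/JK)=\lambda(R/J)+\lambda(R/K)+\lambda(R/(a,b)).\qquad(\dagger)
\]
Since $(a,b)$ is $\mathfrak m$-primary and $R$ is a two-dimensional regular (hence Cohen--Macaulay) local ring, $a,b$ is a regular sequence, so $(a)\cap(b)=(ab)$. Using $JK=aK+bJ$, filter $R/JK$ by the submodule $\bigl((a)+bJ\bigr)/(aK+bJ)$. Multiplication by $a$ induces an isomorphism $R/K\cong\bigl((a)+bJ\bigr)/(aK+bJ)$ --- surjectivity is clear, and $ar\in aK+bJ$ forces $bj\in(a)\cap(b)=(ab)$, whence $r\in K+(b)=K$, the converse inclusion using $a\in J$; meanwhile multiplication by $b$ (a nonzerodivisor modulo $(a)$) gives a short exact sequence $0\to R/\bigl((a)+J\bigr)\to R/\bigl((a)+bJ\bigr)\to R/(a,b)\to 0$, so that $\lambda\bigl(R/((a)+bJ)\bigr)=\lambda(R/J)+\lambda(R/(a,b))$ using $(a)\subseteq J$. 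Adding the two contributions gives $(\dagger)$.

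\emph{Step 2: $\lambda(R/(a,b))$ versus $(2)(b)$.} Set $p=ord(a)$, $q=ord(b)$; since $a\in\mathfrak m^{s}$ and $b\in\mathfrak m^{t}$, we have $p\ge s$ and $q\ge t$. The classical lower bound for the intersection multiplicity of two plane curve germs (alternatively, a multiplicity computation in the one-dimensional Cohen--Macaulay ring $R/(a)$) gives $\lambda(R/(a,b))\ge pq\ge st$, with $\lambda(R/(a,b))=st$ if and only if $p=s$, $q=t$ and the leading forms $\bar a,\bar b\in\operatorname{gr}_{\mathfrak m}(R)\cong\k[x,y]$ satisfy $\gcd(\bar a,\bar b)=1$. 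On the other hand, $(2)(b)$ is an equality of ideals both contained in $\mathfrak m^{r-1}$, so by Nakayama's lemma it holds if and only if it holds after reduction modulo $\mathfrak m^{r}$, that is $\bar a\,\k[x,y]_{t-1}+\bar b\,\k[x,y]_{s-1}=\k[x,y]_{r-1}$ in $\operatorname{gr}_{\mathfrak m}(R)$, where $\bar a,\bar b$ now denote the degree-$s$ and degree-$t$ components of $a,b$; since $t+s=r=\dim_{\k}\k[x,y]_{r-1}$, a dimension count shows this is equivalent to $\bar a\ne0$, $\bar b\ne0$ (i.e.\ $p=s$, $q=t$) together with the two subspaces meeting only in $0$, i.e.\ $\gcd(\bar a,\bar b)=1$.

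\emph{Conclusion.} Combining the steps, $(2)(b)\iff\lambda(R/(a,b))=st\iff\lambda(R/JK)=\lambda(R/J)+\lambda(R/K)+st$ (by $(\dagger)$), the last statement being $(1)(a)$; together with $(2)(b)\Rightarrow(1)(b)$ noted at the outset and with $(2)(a)$ automatic, this gives both directions. I expect the main obstacle to be the bookkeeping behind $(\dagger)$ --- identifying the two subquotients of $R/JK$ correctly and keeping track of where ``$a,b$ a regular sequence'' and ``$a\in J$, $b\in K$'' enter --- while the intersection-multiplicity bound with its equality case I would invoke as classical, and the graded dimension count in Step~2 is routine.
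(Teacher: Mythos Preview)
Your argument is correct and takes a genuinely different route from the paper. The paper also notes that $(2)(a)$ is automatic and that $(2)(b)\Rightarrow(1)(b)$ is trivial, but for $(1)\Rightarrow(2)$ it proceeds by showing that the map $\mathfrak m^{s}/J\oplus\mathfrak m^{t}/K\to\mathfrak m^{r}/I$, $(\bar c,\bar d)\mapsto\overline{bc+ad}$, is injective (using $JK=aK+bJ$ and that $a,b$ is a regular sequence); the length hypothesis $(1)(a)$ then forces it to be surjective, yielding $\mathfrak m^{r}=a\mathfrak m^{t}+b\mathfrak m^{s}$. From there the paper lifts to $\mathfrak m^{r-1}=a\mathfrak m^{t-1}+b\mathfrak m^{s-1}$ by an explicit element chase that \emph{uses} condition $(1)(b)$: one writes an arbitrary $z\in\mathfrak m^{r-1}$ as $mj+nk$ via $(1)(b)$, expands $zx,zy\in\mathfrak m^{r}$, and manipulates the resulting equations with the regular sequence $a,b$. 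Your approach replaces this with two cleaner steps: the length identity $(\dagger)$, obtained by filtering $R/JK$, and the passage to leading forms via Nakayama plus the classical intersection-multiplicity bound $\lambda(R/(a,b))\geq ord(a)\cdot ord(b)$ with its equality case. What this buys you is a direct equivalence $(1)(a)\Leftrightarrow(2)(b)$ which, together with $(2)(b)\Rightarrow(1)(b)$, shows that condition $(1)(b)$ is in fact redundant---it is implied by $(1)(a)$ alone. The paper's argument is more self-contained (no appeal to the intersection-multiplicity fact) but longer, and as written it invokes $(1)(b)$ where, as your proof reveals, it is not actually needed. One small slip: in your verification of the first isomorphism in Step~1 you wrote ``the converse inclusion using $a\in J$'', but what is used there is $b\in K$ (to get $K+(b)=K$); the hypothesis $a\in J$ enters only later, to identify $(a)+J=J$ in the second short exact sequence.
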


\begin{proof} (1) $\Rightarrow$ (2): Let $a,b$ be a joint reduction of $J,K$ so that $JK = aK+bJ$ necessarily holds. Next, consider the map
$$
\frac{{\mathfrak m}^s}{J} \oplus \frac{{\mathfrak m}^t}{K} \longrightarrow \frac{{\mathfrak m}^r}{I},
$$
defined by $(\overline{c},\overline{d}) \mapsto \overline{bc+ad}$. This is clearly well-defined and further it is injective since $JK = aK+bJ$ and $a,b$ form a regular sequence. The numerical equality in (1)(a) is equivalent to this map being an isomorphism (or surjective). Thus, ${\mathfrak m}^r =  b{\mathfrak m}^{s}+ a{\mathfrak m}^{t} + JK = b{\mathfrak m}^{s}+ a{\mathfrak m}^{t}$ since $JK = aK+bJ$.

We now claim that together with condition (1)(b) this implies that ${\mathfrak m}^{r-1} = b{\mathfrak m}^{s-1} + a{\mathfrak m}^{t-1}$.
To see this, take $z \in {\mathfrak m}^{r-1}$ and write it using (1)(b) as $mj+nk$ with $m \in {\mathfrak m}^{t-1}$, $n \in {\mathfrak m}^{s-1}$, $j \in J$, $k \in K$. Now $zx,zy \in {\mathfrak m}^{r}$ and so we have equations
\begin{eqnarray*}
zx&=&am^{\prime} + bn^{\prime},\\
zy&=&am^{\prime\prime} + bn^{\prime\prime},
\end{eqnarray*}
with $m^{\prime},m^{\prime\prime} \in {\mathfrak m}^{t}$ and $n^{\prime},n^{\prime\prime} \in {\mathfrak m}^{s}$, since ${\mathfrak m}^r =   b{\mathfrak m}^{s}+ a{\mathfrak m}^{t}$. Therefore
$y(am^{\prime} + bn^{\prime}) = x(am^{\prime\prime} + bn^{\prime\prime})$, or equivalently,
$a(ym^{\prime}-xm^{\prime\prime}) = b(xn^{\prime\prime}-yn^{\prime})$. Since $a,b$ form a regular sequence, there is a $u \in R$ such that 
\begin{eqnarray*}
au &=& xn^{\prime\prime}-yn^{\prime}, {\text {~and~}}\\
bu&=&ym^{\prime}-xm^{\prime\prime}.
\end{eqnarray*}
The equation ${\mathfrak m}^r  = b{\mathfrak m}^{s}+ a{\mathfrak m}^{t}$ implies that  $ord(a) = s$ and $ord(b) =t$. Hence $u$ is not a unit. Write $u=px+qy$ for $p,q \in R$. The equations above then imply that
\begin{eqnarray*}
y(n^{\prime}+aq) &=& x(n^{\prime\prime}-ap), {\text {~and~}}\\
x(m^{\prime\prime} + bp) &=&y(m^{\prime}-bq).
\end{eqnarray*}
Hence there exist $v,w \in R$ such that $n^{\prime}+aq=vx, n^{\prime\prime}-ap=vy,m^{\prime\prime} + bp=wy, m^{\prime}-bq=wx$.

Finally, $zx = am^{\prime} + bn^{\prime} = a(m^{\prime}-bq)+b(n^{\prime}+aq) = awx+bvx$, and
so $z = aw+bv$. The equations $m^{\prime}-bq=wx$ and $n^{\prime\prime}-ap=vy$ imply that
$w \in {\mathfrak m}^{t-1}, v \in {\mathfrak m}^{s-1}$ as required.\\
(2) $\Rightarrow$ (1): Clearly (2)(b) implies (1)(b). As for (1)(a), we have observed above that it is equivalent to
${\mathfrak m}^r  = b{\mathfrak m}^{s}+ a{\mathfrak m}^{t}$, given (2)(a), and this is also implied by (2)(b).
\end{proof}

We will need the next two lemmas in the proof of our non-existence results.

\begin{lemma}\label{msummand} Let $M \subseteq F = M^{**}$ be a torsion-free $R$-module. Then $I_1(M)$ is a direct summand of $M$ iff $(M:_FI_1(M))$ contains a minimal generator of $F$.
\end{lemma}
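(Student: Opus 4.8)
The plan is to prove the two implications separately, using two standard facts: first, that $I_1(M)=\sum_{\psi\in F^{*}}\psi(M)$, so that every coordinate (with respect to any basis of $F$) of every element of $M$ lies in $I_1(M)$; and second, that the hypothesis $M\subseteq F=M^{**}$ forces $F/M$ to have finite length, whence $I:=I_1(M)$ is either $\mathfrak m$-primary or all of $R$.

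For the ``if'' direction, suppose $v$ is a minimal generator of $F$ lying in $(M:_F I)$, so that $Iv\subseteq M$. Since $v\notin\mathfrak mF$ we may split $F=Rv\oplus F'$ with $F'$ free, and let $p\colon F\to Rv$ be the projection. Composing $p$ with the isomorphism $Rv\cong R$ (sending $v\mapsto 1$) gives a functional on $F$, whose image on $M$ lies in $I_1(M)=I$ by the first fact; translating back, $p(M)\subseteq Iv$. Conversely $p(Iv)=Iv$ because $p$ restricts to the identity on $Rv$, so $p(M)=Iv$. Thus $p|_M\colon M\twoheadrightarrow Iv\cong I$, and the inclusion $Iv\hookrightarrow M$ --- which is legitimate precisely because $Iv\subseteq M$ --- is a section of $p|_M$. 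Hence $M=\ker(p|_M)\oplus Iv$ with $Iv\cong I_1(M)$, so $I_1(M)$ is a direct summand of $M$.

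For the ``only if'' direction, write $M=N\oplus M'$ with $N\cong I$. Since $(-)^{**}$ is additive, $F=M^{**}=N^{**}\oplus(M')^{**}$; in particular $N^{**}$ is a direct summand of the free module $F$, hence free of rank one, say $N^{**}=Re$, and $e$ is a minimal generator of $F$. Now $N\subseteq N^{**}=Re$ means $N=\mathfrak a e$ for an ideal $\mathfrak a\subseteq R$ whose reflexive hull is $R$ (since $N^{**}=Re$), and $N\cong I$ forces $\mathfrak a\cong I$. An isomorphism of two nonzero ideals of the domain $R$ is multiplication by a scalar $\alpha$ in the fraction field, so $\mathfrak a=\alpha I$ and therefore $R=\mathfrak a^{**}=\alpha\,I^{**}$. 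If $I$ is $\mathfrak m$-primary then $I^{**}=R$, so $\alpha R=R$, $\alpha$ is a unit, and $\mathfrak a=I$; the remaining case $I=R$ is immediate, since then $M$ has a free rank-one summand, which is both a copy of $I_1(M)$ and an element of $(M:_F R)=M$. In either case $Ie=N\subseteq M$, that is $e\in(M:_F I)$, and $e$ is a minimal generator of $F$, as required.

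The delicate point is the last step of the ``only if'' direction: upgrading the merely abstract isomorphism $N\cong I_1(M)$ to the on-the-nose equality $N=I_1(M)\cdot e$ inside $F$. This rests on the observation that two isomorphic ideals must agree once a divisoriality normalization is imposed --- here the normalization being supplied by the constraint $N^{**}=Re$ --- which forces the scalar relating them to be a unit. Everything else is a formal splitting argument together with the two standard facts about $I_1$ and reflexive hulls over a two-dimensional regular local ring.
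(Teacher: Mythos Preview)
Your proof is correct and follows essentially the same approach as the paper. The ``if'' direction is identical in spirit: the paper uses a row vector $u$ with $u\cdot v=1$ to build the splitting maps $I_1(M)\xrightarrow{v}M\xrightarrow{u}I_1(M)$, which is exactly your projection argument in different notation. For the ``only if'' direction the paper simply writes ``if $M=I_1(M)\oplus N$, then $F=R\oplus N^{**}$ and the generator of $F$ corresponding to the $R$ summand is in $(M:_FI_1(M))$,'' implicitly assuming that the summand abstractly isomorphic to $I_1(M)$ embeds in its double dual $R$ as the ideal $I_1(M)$ itself. You are right to flag this as the delicate point and your argument---that $\mathfrak a=\alpha I$ with $\alpha$ forced to be a unit by the divisorial normalization $\mathfrak a^{**}=R$---supplies precisely the justification the paper leaves tacit.
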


\begin{proof}
Suppose that $v \in F$ is a minimal generator of $F$ and $I_1(M)v \subseteq M$. There is a row vector $u$ such that $u.v = 1$. Consider the homomorphisms
\begin{equation*}
\xymatrix@C+2pc{
  R \ar[r]^{v}
  &
  F \ar[r]^{u}
  &
  R
}
\end{equation*}
Restricting the first homomorphism to $I_1(M) \subseteq R$ gives a homomorphism from $I_1(M)$ to $M$ since $I_1(M)v \subseteq M$.
Restricting the second homomorphism to $M \subseteq F$ gives a homomorphism from $M$ to $I_1(M)$. Since the composite homomorphism $u.v$ is the identity map of $R$ it follows that the composition of the restricted maps is the identity map of $I_1(M)$. This implies that $I_1(M)$ is a direct summand of $M$. Conversely, if $M = I_1(M) \oplus N$, then, $F = R \oplus N^{**}$ and the generator of $F$ that corresponds to the $R$ direct summand is in $(M:_FI_1(M))$.
\end{proof}

\begin{lemma}\label{contr}
If $M$ is an integrally closed module with ideal of minors $I$ and $I$ is contracted from  $R[\frac{\mathfrak m}{y}]$, then, so is $M$.
\end{lemma}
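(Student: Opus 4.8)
The plan is to unwind the definition of "contracted" in both the ideal and module settings and reduce the module statement to the ideal statement we are handed. Recall that for a fixed generator $y$ of part of ${\mathfrak m}$ with ${\mathfrak m} = (x,y)$, an ideal $I$ is contracted from $S := R[\tfrac{\mathfrak m}{y}]$ if $IS \cap R = I$, and similarly a torsion-free module $M \subseteq F = M^{**}$ is contracted from $S$ if $MS \cap F = M$ (here $MS$ and $IS$ are formed inside $F \otimes_R S$ and $S$ respectively). The first step is to pick an element $v \in MS \cap F$ and show $v \in M$. Since $M$ is integrally closed, the natural tool is the determinantal criterion: it suffices to show that for any $(r-1)$ columns $w_1,\dots,w_{r-1}$ of a matrix for $M$, the determinant $\det[\,v\ w_1\ \cdots\ w_{r-1}\,]$ lies in $\overline{I(M)} = I$.

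Next I would use that $v \in MS$ means, after clearing denominators, that $y^k v = \sum_j s_j m_j$ for some $m_j$ among the generating columns of $M$ and $s_j \in R$ with the crucial property that the $s_j$ can be taken of the form (polynomial in $x/y$) times a power of $y$ — more precisely $v \in MS \cap F$ exactly says $y^N v \in y^N M + (\text{terms involving } \tfrac{x}{y})$, which after the standard manipulation gives that each coordinate of $y^k v$ lies in a suitable piece of $M$ expressed with the $R[\tfrac{\mathfrak m}{y}]$-structure. Then, plugging into the determinant: $y^{k}\det[\,v\ w_1\ \cdots\ w_{r-1}\,] = \det[\,y^k v\ w_1\ \cdots\ w_{r-1}\,]$, and by multilinearity the first column expands into a combination of determinants each of which is an $r\times r$ minor of (a matrix for) $M$ scaled by elements of $S$. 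Hence $y^k \det[\,v\ w_1\ \cdots\ w_{r-1}\,] \in I(M)\,S = I\,S$. Since this element is visibly in $R$ (all entries of $v, w_i$ are in $R$), and $I$ is contracted from $S$, we conclude $y^k\det[\cdots] \in IS \cap R = I$; as $I$ is ${\mathfrak m}$-primary hence contains a power of $y$, and $y$ is a non-zerodivisor, a short induction peeling off one factor of $y$ at a time (using that $I : y \supseteq \dots$, or more cleanly that $\det[\cdots]/y^{j} \in IS\cap R = I$ holds at each stage) yields $\det[\,v\ w_1\ \cdots\ w_{r-1}\,] \in I = \overline{I(M)}$. By the determinantal criterion $v$ is integral over $M$, and since $M$ is integrally closed, $v \in M$, proving $MS \cap F = M$.

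The main obstacle I anticipate is the bookkeeping in the second step: correctly translating "$v \in MS \cap F$" into a usable algebraic statement and verifying that the $S$-linear combination expressing $y^k v$ really does push the determinant into $I(M)\,S$ rather than some larger ideal — i.e., making sure the denominators that appear are controlled by powers of $y$ alone and not by $x$. This is essentially the observation that $S = R[\tfrac{x}{y}]$ is obtained by inverting only $y$ (localizing at $y$ and then taking the subring generated by $x/y$), so that $IS \cap R$ makes sense and the divisibility argument stripping powers of $y$ is legitimate; once that is set up carefully the determinant computation is routine multilinear algebra and the integral-closure input is exactly Proposition \ref{fittingideals}-style reasoning already used in the proof of Proposition \ref{isic}.
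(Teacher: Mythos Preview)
Your overall strategy via the determinantal criterion is sound, but the ``peeling off $y$'' step is a genuine gap. From $y^k d \in I$ with $d \in R$ you cannot conclude $d \in I$ even when $I$ is contracted from $S = R[\frac{\mathfrak m}{y}]$: take $I = {\mathfrak m}^3$ and $d = x^2$; then $yd \in I$ but $d \notin I$, and $d \notin IS = y^3 S$ either, so neither variant of your induction goes through. The fix is simply not to clear denominators at all: $v \in MS$ already means $v = \sum_j s_j m_j$ with $s_j \in S$, so by multilinearity $\det[v\ w_1\ \cdots\ w_{r-1}] = \sum_j s_j \det[m_j\ w_1\ \cdots\ w_{r-1}] \in I(M)S = IS$ directly; since this determinant has all entries in $R$ it lies in $IS \cap R = I$, and the determinantal criterion finishes the argument with no division required.

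The paper avoids $S$ entirely by using the equivalent colon characterisation: $I$ is contracted from $R[\frac{\mathfrak m}{y}]$ iff $(I:y) = (I:{\mathfrak m})$, and likewise $(M:_F y) = (M:_F {\mathfrak m})$ for modules. So it takes $v \in F$ with $yv \in M$, notes that $y\det[v\ m_2\ \cdots\ m_r] = \det[yv\ m_2\ \cdots\ m_r] \in I$, whence $\det[v\ m_2\ \cdots\ m_r] \in (I:y) = (I:{\mathfrak m})$, so $\det[{\mathfrak m}v\ m_2\ \cdots\ m_r] \subseteq I$ and thus ${\mathfrak m}v \subseteq \overline{M} = M$. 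This is essentially your argument specialised to $k=1$, where the single factor of $y$ is absorbed by the colon identity rather than divided out --- which is exactly why the paper's version succeeds and your general-$k$ version does not.
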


\begin{proof}
We know that $(I:y) = (I:{\mathfrak m})$ and need to see that $(M:_F y) = (M:_F {\mathfrak m})$. So suppose that $v \in F$ is such that $yv \in M$. By the determinantal criterion, for any $r-1$ elements $m_2,\cdots,m_r$ of $M$, 
the determinant of $[yv ~ m_2 ~ \cdots ~ m_r] \in \overline{I} = I$. And hence 
$det ([{\mathfrak m}v ~ m_2 ~ \cdots ~ m_r]) \subseteq I$. So ${\mathfrak m}v \subseteq \overline{M} = M$.
\end{proof}

We are now prepared to prove our first non-existence result.

\begin{proposition}\label{nonexis1} Suppose that $I$ is integrally closed of order $r$ and the image of $I$ in $\frac {{\mathfrak m}^r}{{\mathfrak m}^{r+1}}$ is at least 2-dimensional. If $I$ is the ideal of minors of an integrally closed module $M$ of rank $r$ without free direct summands, then ${\mathfrak m}$ is a direct summand of $M$.
\end{proposition}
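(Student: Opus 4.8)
The strategy is to combine the direct-summand criterion of Lemma~\ref{msummand} with the determinantal criterion, reducing the statement to the existence of a single vector, and then to feed the hypothesis on $I$ into that last step.

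First I would reduce to a purely module-theoretic assertion. If $w$ is a minimal generator of $F$ with $\mathfrak m w\subseteq M$, then $\mathfrak m\subseteq I_1(M)\subseteq\mathfrak m$, so $I_1(M)=\mathfrak m$ and $w\in(M:_FI_1(M))$; Lemma~\ref{msummand} then gives that $\mathfrak m$ is a direct summand of $M$. So it suffices to produce such a $w$. Now, since $\overline{I(M)}=I$, the determinantal criterion says that for $z\in\mathfrak m$ one has $zw\in M=\overline M$ iff $z\cdot\det[\,w\mid m_2\mid\cdots\mid m_r\,]\in I$ for all $m_2,\dots,m_r\in M$; hence $\mathfrak m w\subseteq M$ iff $\det[\,w\mid m_2\mid\cdots\mid m_r\,]\in(I:\mathfrak m)$ for all $m_i\in M$, i.e. iff $w\in(M:_F\mathfrak m)$. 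Because $M\subseteq\mathfrak m F$, all these determinants lie in $\mathfrak m^{r-1}$ automatically. So the goal becomes: show $(M:_F\mathfrak m)\not\subseteq\mathfrak m F$.

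Next I would unwind the hypothesis. The image of $I$ in $\mathfrak m^r/\mathfrak m^{r+1}$ being at least $2$-dimensional is equivalent to $\mathfrak m\mid I$, say $I=\mathfrak m I'$ with $I'$ integrally closed of order $r-1$: if $I=\mathfrak m I'$ the image of $I$ is $(\mathfrak m/\mathfrak m^2)\cdot(\text{image of }I')$, which is at least $2$-dimensional; conversely, by Zariski's unique factorisation $I$ is a product of a power of $\mathfrak m$ and simple integrally closed ideals different from $\mathfrak m$, and standard facts about the latter (each has a one-dimensional leading-form space) force the image of $I$ to be one-dimensional when $\mathfrak m\nmid I$. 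A length count ($\lambda(I'/\mathfrak m I')=\mu(I')=r$ equals the type of $R/\mathfrak m I'$, which is $\lambda((\mathfrak m I':\mathfrak m)/\mathfrak m I')$) then gives $(I:\mathfrak m)=I'$. Thus we must find a minimal generator $w$ of $F$ with $\det[\,w\mid m_2\mid\cdots\mid m_r\,]\in I'$ for all $m_i\in M$. Moreover, since $I=\mathfrak m I'$ is integrally closed it is contracted from $R[\mathfrak m/x]$ for a suitable $x$ completing $\mathfrak m=(x,y)$, so by Lemma~\ref{contr} $M$ is contracted too; hence $\mu(M)=ord(M)+rk(M)=2r$ and $\lambda((M:_F\mathfrak m)/M)=r$.

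For the remaining (and hardest) step I would pass to a minimal reduction $N\subseteq M$ via Proposition~\ref{genposi}, so that $N^T$ resolves $J=(a_1,\dots,a_{r+1})$ with $a_1,a_2$ a reduction of $J$ (hence of $I$), and then, as in the proof of Proposition~\ref{extrcharac}, premultiply $N$ by a suitable element of $GL_r(R)$ to bring its last $r-1$ columns to a convenient normal form. By multilinearity and the determinantal criterion, $\mathfrak m w\subseteq M$ is equivalent to $\det[\,w\mid n_{j_2}\mid\cdots\mid n_{j_r}\,]\in(I:\mathfrak m)=I'$ for every $(r-1)$-subset $\{j_2,\dots,j_r\}$ of the columns of $N$. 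The plan is to use the factorisation $I=\mathfrak m I'$ and the normalised shape of $N$ to exhibit an explicit $w\notin\mathfrak m F$ — built from a standard basis vector together with a staircase column — that satisfies all $\binom{r+1}{2}$ of these membership conditions at once; this is the behaviour one sees already for the canonical module $M_r(I)=M_r(\mathfrak m I')$, which splits off a copy of $\mathfrak m$. The main obstacle is precisely to carry this out for an \emph{arbitrary} integrally closed $M$ with $I(M)=I$ rather than just $M_r(I)$: the vector $w$ cannot simply be read off a known matrix, and one must show that membership of all those determinants in $I'=(I:\mathfrak m)$ is forced, for an essentially generic choice of $w$, by the single structural fact $I=\mathfrak m I'$ together with the normal form of the chosen minimal reduction.
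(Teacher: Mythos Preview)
Your proposal correctly sets up the reduction to Lemma~\ref{msummand} and correctly identifies that contractedness (Lemma~\ref{contr}) will convert $yw\in M$ into $\mathfrak m w\subseteq M$, but it does not actually produce the vector $w$. You explicitly flag the last step---exhibiting $w\notin\mathfrak m F$ with all the required determinants in $(I:\mathfrak m)$, for an \emph{arbitrary} integrally closed $M$ with $I(M)=I$---as ``the main obstacle'' and offer only a plan (pass to a minimal reduction, normalise its last $r-1$ columns, read off $w$). That plan has a real problem: the normalisation of those $r-1$ columns to the staircase form, as in Proposition~\ref{extrcharac}, is available only when their $(r-1)$-minor ideal equals $\mathfrak m^{r-1}$, i.e.\ when $M$ is extremal, which is not assumed here. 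So as it stands the proposal is not a proof.

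The paper's argument avoids both the detour through the factorisation $I=\mathfrak m I'$ and the minimal-reduction machinery by manipulating a matrix for $M$ itself. Choose $y$ so that $I$ is contracted from $R[\mathfrak m/y]$; then $(I,y)=(x^r,y)$, so some maximal minor of (a matrix for) $M$ has unit $x^r$-coefficient. Row-reduce by the inverse of the corresponding $r\times r$ matrix of $x$-coefficients so that the first $r$ columns have the shape $xe_i+y(\ast)$; column operations against these then eliminate every $x$-term in the remaining $r$ columns, leaving each of the form $yv_k$. If every $v_k$ lay in $\mathfrak m F$, the $[1\,2\,\cdots\,r]$-minor would be the only generator of $I$ of order $r$, contradicting the $2$-dimensionality hypothesis; hence some $v_k\notin\mathfrak m F$ is a minimal generator of $F$ with $yv_k\in M$, and Lemmas~\ref{contr} and~\ref{msummand} finish. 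The missing idea is that a single order-$r$ element of $I$ already lets you normalise the \emph{full} $r\times 2r$ matrix of $M$, and it is then the second independent order-$r$ element that forces one of the remaining columns to be $y$ times a unit vector.
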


\begin{proof} Choose any minimal generator $y$ of ${\mathfrak m}$ such that $I$ is contracted from $R[\frac{\mathfrak m}{y}]$. Then it is well known - see \cite{Kdy1995} for instance - that
$$\lambda(\frac{R}{(I,y)}) = \lambda(\frac{I:y}{I}) = \lambda(\frac{I:\mathfrak m}{I}) = ord(I) = r.$$
So if ${\mathfrak m} = (x,y)$, then $(I,y) = (x^r,y)$. This implies that $x^r + a_1x^{r-1}y+a_2x^{r-2}y^2 + \cdots + a_ry^r \in I$ for some $a_1,\cdots,a_r$.

Suppose that $I$ is the ideal of minors of an integrally closed module $M$ of rank $r$ without free direct summands. Since $I$ is of order $r$ and $rk(M) = r$,  $M$ is minimally $2r$-generated. Since $M$ has no free direct summands we may assume that $M$ is generated by the columns of a matrix of the form:
$$
\left[
\begin{array}{cccc}
a_{1,1}x+b_{1,1}y & a_{1,2}x+b_{1,2}y & \cdots & a_{1,2r}x+b_{1,2r}y\\
a_{2,1}x+b_{2,1}y & a_{2,2}x+b_{2,2}y & \cdots & a_{2,2r}x+b_{2,2r}y\\
\vdots & \vdots & & \vdots \\
a_{r,1}x+b_{r,1}y & a_{r,2}x+b_{r,2}y & \cdots & a_{r,2r}x+b_{r,2r}y
\end{array}
\right].
$$

Every maximal minor of this matrix may be regarded as a homogeneous form of degree $r$ in $x,y$ with coefficients
being polynomials in the $a_{i,j}$ and $b_{i,j}$. Since $x^r + a_1x^{r-1}y+a_2x^{r-2}y^2 + \cdots + a_ry^r$ is in the ideal generated by these forms, the coefficient
of $x^r$ in at least one of these minors must be a unit. By column operations, we may assume that it is the minor corresponding to the
first $r$ columns. The coefficient of $x^r$ in this minor is the determinant of the matrix
$$\left[
\begin{array}{cccc}
a_{1,1} & a_{1,2} & \cdots & a_{1,r}\\
a_{2,1} & a_{2,2} & \cdots & a_{2,r}\\
\vdots & \vdots & & \vdots \\
a_{r,1} & a_{r,2} & \cdots & a_{r,r}
\end{array}
\right].
$$
So this matrix is in $GL(r,R)$ and multiplying by its inverse, we may assume that $M$ is generated by a matrix of the form
$$
\left[
\begin{array}{ccccccc}
x+b_{1,1}y & b_{1,2}y & \cdots & b_{1,r}y & a_{1,r+1}x+b_{1,r+1}y & \cdots &a_{1,2r}x+b_{1,2r}y\\
b_{2,1}y & x+b_{2,2}y & \cdots & b_{2,r}y & a_{2,r+1}x+b_{2,r+1}y & \cdots &a_{2,2r}x+b_{2,2r}y\\
\vdots & \vdots & \ddots & \vdots & \vdots & \ddots & \vdots\\
b_{r,1}y & b_{r,2}y & \cdots & x+b_{r,r}y & a_{r,r+1}x+b_{r,r+1}y & \cdots &a_{r,2r}x+b_{r,2r}y
\end{array}
\right],
$$
and then by further column operations, by a matrix of the form
$$
\left[
\begin{array}{ccccccc}
x+b_{1,1}y & b_{1,2}y & \cdots & b_{1,r}y & b_{1,r+1}y & \cdots & b_{1,2r}y\\
b_{2,1}y & x+b_{2,2}y & \cdots & b_{2,r}y & b_{2,r+1}y & \cdots &b_{2,2r}y\\
\vdots & \vdots & \ddots & \vdots & \vdots & \ddots & \vdots\\
b_{r,1}y & b_{r,2}y & \cdots & x+b_{r,r}y & b_{r,r+1}y & \cdots & b_{r,2r}y
\end{array}
\right].
$$

If none of the $b_{i,r+k}$ are units, then the last $r$ columns have all entries in ${\mathfrak m}^2$ and so the only generator of $I$ of order $r$ would be the $[1 ~ 2 ~ \cdots ~ r]$-minor. Hence the image of $I$ in $\frac {{\mathfrak m}^r}{{\mathfrak m}^{r+1}}$ would be 1-dimensional contradicting the hypothesis on $I$. 
If $v \in F$ is the column of coefficients of $y$ where some $b_{i,r+k}$ is a unit, then $v$ is a minimal generator of $F$ and $yv \in M$. Further, $I_1(M) = {\mathfrak m}$.
Thus, $v \in (M:_Fy) = (M:_F{\mathfrak m})$, since $I$ and therefore also $M$ - by Lemma \ref{contr} - is contracted from $R[\frac{\mathfrak m}{y}]$. Now by Lemma \ref{msummand}, $\mathfrak m$ is a direct summand of $M$.
\end{proof}

\begin{example}
{\rm 
Let $I=(x,y)(x,y^{m_2}) \cdots (x,y^{m_r})$ where $1 \leq m_2 \leq \cdots \leq m_r$ and $r \geq 2$.
Then $x^r, x^{r-1}y \in I$ so that the image of $I$ in $\frac{\fkm^r}{\fkm^{r+1}}$ is at least $2$-dimensional. 
By Proposition \ref{nonexis1}, 
$I$ is not $I(M)$ for an indecomposable integrally closed module $M$ of rank $r$. 
}
\end{example}

\begin{corollary}\label{splcase1} Suppose that $K$ is an integrally closed ${\mathfrak m}$-primary ideal of order $r-1$. Then if $I = {\mathfrak m}K$ is the ideal of minors of an integrally closed module $M$ of rank $r$ without free direct summands, then ${\mathfrak m}$ is a direct summand of $M$. 
\end{corollary}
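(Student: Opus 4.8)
The plan is to derive this as an immediate consequence of Proposition \ref{nonexis1} applied to the ideal $I = {\mathfrak m}K$. For that I need to check its three hypotheses: that $I$ is integrally closed, that $ord(I) = r$, and that the image of $I$ in $\frac{{\mathfrak m}^r}{{\mathfrak m}^{r+1}}$ is at least $2$-dimensional.

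The first is immediate from Zariski's product theorem: both ${\mathfrak m}$ and $K$ are integrally closed, hence so is $I = {\mathfrak m}K$. For the order, recall that $gr_{\mathfrak m}(R)$ is a polynomial ring in two variables over the residue field, hence a domain, so the ${\mathfrak m}$-adic order function is additive on products of ideals; thus $ord(I) = ord({\mathfrak m}) + ord(K) = 1 + (r-1) = r$. In particular $I \subseteq {\mathfrak m}^r$, so the image of $I$ in $\frac{{\mathfrak m}^r}{{\mathfrak m}^{r+1}}$ is a meaningful object.

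The only point requiring an argument is the $2$-dimensionality, and it is short. Since $ord(K) = r-1$, choose $f \in K$ with $ord(f) = r-1$ and write $\bar f \neq 0$ for its class in $\frac{{\mathfrak m}^{r-1}}{{\mathfrak m}^r}$. Then $xf$ and $yf$ lie in ${\mathfrak m}K = I$ and, by additivity of the order, have order exactly $r$; their images in $\frac{{\mathfrak m}^r}{{\mathfrak m}^{r+1}}$ are $\bar x\bar f$ and $\bar y\bar f$, where $\bar x,\bar y$ are the classes of $x,y$ in $\frac{{\mathfrak m}}{{\mathfrak m}^2}$. If $\alpha\bar x\bar f + \beta\bar y\bar f = 0$ for scalars $\alpha,\beta$, then $(\alpha\bar x + \beta\bar y)\bar f = 0$ in the domain $gr_{\mathfrak m}(R)$, and since $\bar f \neq 0$ this forces $\alpha\bar x + \beta\bar y = 0$, hence $\alpha = \beta = 0$. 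So these two classes are linearly independent and the image of $I$ in $\frac{{\mathfrak m}^r}{{\mathfrak m}^{r+1}}$ is at least $2$-dimensional.

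With all three hypotheses verified, Proposition \ref{nonexis1} applies to $M$ and yields that ${\mathfrak m}$ is a direct summand of $M$. There is no real obstacle here: the corollary merely repackages Proposition \ref{nonexis1}, and the only mild care needed is the elementary linear-independence computation inside $gr_{\mathfrak m}(R)$ that supplies the $2$-dimensionality hypothesis.
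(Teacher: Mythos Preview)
Your proof is correct and follows the same strategy as the paper: verify the hypotheses of Proposition~\ref{nonexis1} for $I = {\mathfrak m}K$, the only nontrivial one being the $2$-dimensionality of the image of $I$ in $\frac{{\mathfrak m}^r}{{\mathfrak m}^{r+1}}$. Your verification of this via a direct linear-independence computation in the domain $gr_{\mathfrak m}(R)$ is slightly cleaner than the paper's, which instead invokes contractedness of $K$ to produce a specific element $x^{r-1} + a_1x^{r-2}y + \cdots + a_{r-1}y^{r-1} \in K$ and then multiplies by $x$ and $y$; but the two arguments are minor variants of the same idea.
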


\begin{proof} Choose $y$ so that $K$ is contracted from $R[\frac{\mathfrak m}{y}]$ and $x$ so that ${\mathfrak m} = (x,y)$. As in the proof of Proposition \ref{nonexis1}, $x^{r-1} + a_1x^{r-2}y+a_2x^{r-3}y^2 + \cdots + a_{r-1}y^{r-1} \in K$ for some $a_1,\cdots,a_{r-1} \in R$. Hence $x^{r} + a_1x^{r-1}y+a_2x^{r-2}y^2 + \cdots + a_{r-1}xy^{r-1}, x^{r-1}y + a_1x^{r-2}y^2+a_2x^{r-3}y^3 + \cdots + a_{r-1}y^{r} \in I$. So the image of $I$ in $\frac {{\mathfrak m}^{r}}{{\mathfrak m}^{r+1}}$ is at least 2-dimensional. Now appeal to Proposition \ref{nonexis1} to finish the proof.
\end{proof}

The next proposition is our main non-existence result. In the proof we will use the observation that if $M$ is a module of rank $r$ without free direct summands that is integrally closed and with $I(M) = I$, then $M \supseteq (I:I_1(M)^{r-1})F \supseteq (I:{\mathfrak m}^{r-1})F$. This follows from the determinantal criterion.

\begin{proposition}\label{nonexis2} Let $J$ and $K$ be integrally closed ${\mathfrak m}$-primary ideals of $R$
of orders $1$ and $2$ respectively such that 
\begin{enumerate}
\item $\lambda(R/JK)-\lambda(R/J)-\lambda(R/K)=2$, and
\item $\mathfrak m^2=J \mathfrak m + K$.
\end{enumerate}
Suppose that $M$ is an integrally closed module of rank $3$ without free direct summands and with ideal of minors $I=JK$. Then $M$ is decomposable. 
\end{proposition}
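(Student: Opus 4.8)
We aim to exhibit an ideal as a direct summand of $M$; since every decomposable module of rank three has an ideal as a direct summand, this suffices. Write $I=JK$, which is integrally closed of order $3$; recall $M\subseteq\mathfrak m F$ and $\mu(M)=ord(I)+3=6$. Let $\ell$ be the leading form in $\mathfrak m/\mathfrak m^2$ of a minimal generator $a$ of $J$ of order one. If the image of $K$ in $\mathfrak m^2/\mathfrak m^3$ is at least two-dimensional (this covers $J=\mathfrak m$ and $K=\mathfrak m^2$), then the image of $I$ in $\mathfrak m^3/\mathfrak m^4$ contains $\ell$ times that image, and since multiplication by the nonzero linear form $\ell$ is injective on forms it too is at least two-dimensional; Proposition \ref{nonexis1} then shows $\mathfrak m$ is a direct summand of $M$. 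So we may assume $J\ne\mathfrak m$ and that the image of $K$ in $\mathfrak m^2/\mathfrak m^3$ is the line spanned by a quadratic $q$. Then $J$ is minimally generated by $a$ and one element of order $\ge2$, and the image of $I$ in $\mathfrak m^3/\mathfrak m^4$ is the line spanned by $\ell q$.

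I would then translate conditions (1) and (2) through Proposition \ref{jtred} with $s=1$, $t=2$: for a joint reduction $a,b$ of $J,K$ with $ord(a)=1$, $ord(b)=2$ we get $JK=aK+bJ$ and $\mathfrak m^2=a\mathfrak m+(b)$, so in particular $\ell\nmid q$ (with $\ell,q$ the leading forms of $a,b$). As $a,b$ is a system of parameters, $\gcd(a,b)=1$, and from $JK=aK+bJ$ one gets $(JK:a)=K$ and $(JK:b)=J$, hence $(I:\mathfrak m^2)=(JK:a\mathfrak m+(b))=(K:\mathfrak m)\cap J$; by the determinantal criterion $M\supseteq(I:\mathfrak m^2)F=((K:\mathfrak m)\cap J)F$. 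Taking a regular system of parameters $x,y$ with $x=a$ and $y$ generic enough that $I$ — and hence, by Lemma \ref{contr}, also $M$ — is contracted from $R[\mathfrak m/y]$, we may assume $J=(x,y^d)$ for some $d\ge2$ and $\ell=\bar x$; and since $(I,y)=(x^3,y)$ forces the $x^3$-coefficient of $\ell q$ to be a unit, $q$ has unit coefficients on $x^2$ and on $y^2$.

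The remaining (and hardest) case would be handled by imitating the proof of Proposition \ref{nonexis1}. Some maximal minor of $M$ has a unit $x^3$-coefficient, so after a change of basis of $F$ and column operations $M$ is generated by the columns of $[\,C\mid D\,]$ with $C=xI_3+yB'$, $B'\in M_3(R)$, and $D=[\,yw_4\ yw_5\ yw_6\,]$, $w_k\in F$. If some $w_k$ is a minimal generator of $F$, then $yw_k\in M$ and the contractedness of $M$ give, by Lemma \ref{msummand}, that the proper $\mathfrak m$-primary ideal $I_1(M)$ is a direct summand of $M$, and we are done; so assume every $w_k\in\mathfrak m F$, i.e.\ $D$ has all entries in $\mathfrak m^2$. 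Then the leading form of $\det C$ is $\det(xI_3+y\bar B')$, which equals $\bar x q$ up to a unit; since $q$ is divisible by neither $\bar x$ nor $\bar y$, the matrix $\bar B'\in M_3(R/\mathfrak m)$ has $0$ as a simple eigenvalue and two nonzero eigenvalues, hence has rank two, and its one-dimensional kernel is spanned by a vector which, after a further change of basis of $F$, we take to be $e_1$. Then the first column of $C$ is $xe_1$ plus an element of $\mathfrak m^2 F$, the $\{2,3\}\times\{2,3\}$ minor of $C$ has leading form $q$, and the transform of $M$ at $R_1:=R[\mathfrak m/y]$ has ideal of minors $I^{R_1}=J^{R_1}=(x/y,y^{d-1})$ of order one (since $\bar x\nmid q$, $K$ has no base point at $R_1$, so $K^{R_1}=R_1$), forcing $M^{R_1}\cong R_1^{\,2}\oplus(x/y,y^{d-1})$. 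Combining the explicit matrix, the containment $M\supseteq((K:\mathfrak m)\cap J)F$, the leading-form identity for $I$ (which, via the determinantal criterion, reduces $xe_1,y^de_1\in M$ to showing that certain products of elements of $J$ with minors of $M$ lie in $JK$), and the shape of $M^{R_1}$, one shows $Je_1\subseteq M$ and that the functional $u\colon F\to R$ complementary to $Re_1$ with $u(e_1)=1$ satisfies $u(M)\subseteq J$; then $u(M)=J$ and $M=Je_1\oplus\ker(u|_M)$ is decomposable. The main obstacle is exactly this last step for $d\ge3$: since $\mathfrak m^2\not\subseteq J$ one cannot absorb the columns of $D$ wholesale, and the fine interplay of conditions (1), (2) with the transforms of $I,J,K$ at $R_1$ and at the base points of $K$ must be used carefully — essentially, one must prove that the rank-two piece of $M$ built from $D$ and the non-null eigendirections of $B'$ is a genuine submodule with ideal of minors $K$. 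I expect that verification to be the technical heart of the proof.
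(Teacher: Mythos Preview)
Your proposal is incomplete by your own admission: the crucial step — showing $Je_1\subseteq M$ and finding a functional $u$ with $u(M)\subseteq J$ — is only sketched, and the sketch does not close. Concretely, after your eigenvalue normalisation the $\{2,3\}\times\{2,3\}$ minor of $C$ has leading form $q$, but that alone does not place it in $K$; so the determinantal criterion for $xe_1\in M$ reduces to $x\cdot I_2^{\{2,3\}}(M)\subseteq JK$, which you have not established. Nor does the transform decomposition $M^{R_1}\cong R_1^{\,2}\oplus J^{R_1}$ help directly: it gives a splitting over $R_1$, but the $R_1$-basis realising it need not descend to $F$, and there is no mechanism in your outline for lifting it. Your candidate direction $e_1$ (the kernel of $\bar B'$) is plausible, but nothing you have set up forces $Je_1\subseteq M$ when $d\ge 3$.

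The paper's argument is both different and more direct. It swaps the roles of your coordinates: take $y=a$ so that $J=(x^m,y)$, and choose $x$ so that $I$ is contracted from $R[\mathfrak m/x]$. Then $b=x^2+qxy+ry^2$, and the pivotal minor is selected by the $x^2y$-coefficient rather than the $x^3$-coefficient. The technical heart is the single colon computation
\[
x\bigl((I,y):x^3\bigr)=(x^m,xy)\subseteq\bigl((I:\mathfrak m^2),\,\mathfrak m y\bigr),
\]
valid once $m\ge 2$ and $x^2\notin K$; it lets one absorb every stray $a_{33}x$-term (and the analogous terms in columns $4,5,6$) into $(I:\mathfrak m^2)F\subseteq M$. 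After that, a short sequence of elementary column and row operations reduces the generating matrix, modulo $(I:\mathfrak m^2)F$, to block form with a $1\times 1$ block $[y]$, exhibiting $((I:\mathfrak m^2),y)=J$ as a direct summand. No transforms, no spectral analysis, and no appeal to the structure of $M^{R_1}$ are needed; the special cases $J=\mathfrak m$ and $x^2\in K$ are handled up front by Corollary~\ref{splcase1} and Proposition~\ref{nonexis1}.
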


\begin{proof} Let $a,b$ be a joint reduction of $J,K$ so that $I = aK+bJ$. The element $a$ is of order $1$ and
so is a minimal generator, say $y$, of ${\mathfrak m}$. Then, by Proposition \ref{jtred}, $\mathfrak m^2=y \mathfrak m + bR$.
Choose $x$ such that $J$ and $K$ are contracted from $R[\frac{\mathfrak m}{x}]$ and ${\mathfrak m} = (x,y)$. Since $ord(b) = 2$, write $b = px^2+qxy+ry^2$. Since
$\lambda(\frac{R}{(y,b)}) = 2$, $p$ is necessarily a unit and we may assume that it is $1$ by changing $b$ appropriately. Further $J$ is of the form $(x^m,y)$ for some $m \geq 1$. Thus, $I$ contains the element $x^2y+qxy^2+ry^3$ and is also contracted from $R[\frac{\mathfrak m}{x}]$.

We will dispose of two special cases at the outset. The first is when $m=1$, so that $J = {\mathfrak m}$,  which follows from Corollary \ref{splcase1}. The second is when $x^2 \in K$, in which case $x^2y \in I$ and since $I$ is contracted from $R[\frac{\mathfrak m}{x}]$, $y^3 \in I$ so that Proposition \ref{nonexis1} applies to complete the proof. In the rest of the proof we assume that $m \geq 2$ and $x^2 \notin K$.

Note that $(I,y):x^3 = (yK,bx^m,y):x^3 = (bx^m,y):x^3 = (x^{m+2},y) : x^3 = (x^{m-1},y)$. So $x((I,y):x^3) = (x^{m},xy)$. Also, $I:{\mathfrak m}^2 = I:x^2 = (yK,bx^m):x^2$ and hence contains $bx^{m-2}=x^m+qx^{m-1}y+rx^{m-2}y^2$. So $x((I,y):x^3) \subseteq ((I:{\mathfrak m}^2),{\mathfrak m}y)$, since $m \geq 2$. Hence if $Ax^3 \in (I,y)$ then $Ax \in ((I:{\mathfrak m}^2),{\mathfrak m}y)$.

Let $M$ be an integrally closed module of rank 3 with ideal of minors $I$. Suppose that $M$ is generated by the columns of
$$
\left[
\begin{array}{cccc}
a_{11}x+b_{11}y & a_{12}x+b_{12}y & \cdots & a_{16}x+b_{16}y\\
a_{21}x+b_{21}y & a_{22}x+b_{22}y & \cdots & a_{26}x+b_{26}y\\
a_{31}x+b_{31}y & a_{32}x+b_{32}y & \cdots & a_{36}x+b_{36}y
\end{array}
\right].
$$
Since $I$ contains an element with $x^2y$ coefficient $1$, we may assume, by permuting columns, that the coefficient of $x^2y$ in the $[1~2~3]$ minor is a unit. This coefficient is the sum $|A_1~A_2~B_3| + |A_1~B_2~A_3|+|B_1~A_2~A_3|$, where $A_1$ is
$$
\left[
\begin{array}{c}
a_{11} \\ a_{21} \\ a_{31}
\end{array}
\right]
$$
and similarly the others. So at least one of those three determinants is a unit and by permuting columns we may assume that the
matrix
$$
\left[
\begin{array}{ccc}
A_1 & A_2 & B_3
\end{array}
\right]
=
\left[
\begin{array}{cccc}
a_{11} & a_{12} & b_{13}\\
a_{21} & a_{22} & b_{23}\\
a_{31} & a_{32} & b_{33}
\end{array}
\right]
$$
is invertible. Then multiply on the left by the inverse of this matrix to conclude that $M$ is generated by the columns of
$$
\left[
\begin{array}{rrlcccc}
x+b_{11}y & b_{12}y & a_{13}x & a_{14}x+b_{14}y & a_{15}x+b_{15}y & a_{16}x+b_{16}y\\
b_{21}y & x+b_{22}y & a_{23}x & a_{24}x+b_{24}y & a_{25}x+b_{25}y & a_{26}x+b_{26}y\\
b_{31}y & b_{32}y & a_{33}x+y & a_{34}x+b_{34}y & a_{35}x+b_{35}y & a_{36}x+b_{36}y
\end{array}
\right],
$$
where the coefficent of $x^2y$ in the $[1~2~3]$ minor is a unit.
Then use further column operations to get to the form
$$
\left[
\begin{array}{rrlcccc}
x+b_{11}y & b_{12}y & a_{13}x & b_{14}y & b_{15}y & b_{16}y\\
b_{21}y & x+b_{22}y & a_{23}x & b_{24}y & b_{25}y & b_{26}y\\
b_{31}y & b_{32}y & a_{33}x+y & a_{34}x+b_{34}y & a_{35}x+b_{35}y & a_{36}x+b_{36}y
\end{array}
\right],
$$
without changing the  $[1~2~3]$ minor.

Write the $[1~2~3]$ minor as $Ax^3 + Bx^2y + Cxy^2 + Dy^3$. This is in $I$ and so
$Ax^3 \in (I,y)$ and so $Ax \in ((I:{\mathfrak m}^2),{\mathfrak m}y)$. Noting that $A=a_{33}$ we see that by subtracting an element of $(I:{\mathfrak m}^2)F$, the third column  has the form
$$
\left[
\begin{array}{c}
a_{13}x\\
a_{23}x\\
(1+t )y
\end{array}
\right]
$$
for some $t \in {\mathfrak m}$. 
Similarly the 4th, 5th and 6th columns may be reduced to the form below.
So $M$ is generated modulo $(I:{\mathfrak m}^2)F$ by the columns of
$$
\left[
\begin{array}{rrlcccc}
x+b_{11}y & b_{12}y & a_{13}x & b_{14}y & b_{15}y & b_{16}y\\
b_{21}y & x+b_{22}y & a_{23}x & b_{24}y & b_{25}y & b_{26}y\\
b_{31}y & b_{32}y & y & b_{34}y & b_{35}y & b_{36}y
\end{array}
\right].
$$
Since $(I:{\mathfrak m}^2) = (yK,bx^m):x^2 = (y(K:x^2),bx^{m-2})$, this is contained in ${\mathfrak m}^2$ since $x^2 \notin K$. Now, by subtracting an element of $(I:{\mathfrak m}^2)F$,
the $[1~2~3]$ minor changes by an element of ${\mathfrak m}^4$ and hence the coefficient of $x^2y$ is still a unit.

Using the third column, make $b_{31},b_{32}$ vanish at the expense of introducing $x$ terms in the rows above as in
$$
\left[
\begin{array}{rrlcccc}
a_{11}x+b_{11}y & a_{12}x+b_{12}y & a_{13}x & b_{14}y & b_{15}y & b_{16}y\\
a_{21}x+b_{21}y & a_{22}x+b_{22}y & a_{23}x & b_{24}y & b_{25}y & b_{26}y\\
0 & 0 & y & b_{34}y & b_{35}y & b_{36}y
\end{array}
\right].
$$
This does not change the  $[1~2~3]$ minor. So the coefficient of $x^2y$ in this minor is still a unit. So
$$
\left[
\begin{array}{cc}
a_{11} & a_{12}\\
a_{21} & a_{22}
\end{array}
\right]
$$
is a unit. Multiply by the inverse of this matrix and assume that $M$ is now generated modulo $(I:{\mathfrak m}^2)F$ by
$$
\left[
\begin{array}{rrlcccc}
x+b_{11}y & b_{12}y & a_{13}x & b_{14}y & b_{15}y & b_{16}y\\
b_{21}y & x+b_{22}y & a_{23}x & b_{24}y & b_{25}y & b_{26}y\\
0 & 0 & y & b_{34}y & b_{35}y & b_{36}y
\end{array}
\right].
$$
Now, use the first two columns to convert the $a_{13}x$ and $a_{23}x$ terms to multiples of $y$ and then use the third row to make those vanish. So now, $M$ is now generated modulo $(I:{\mathfrak m}^2)F$ by
$$
\left[
\begin{array}{rrlcccc}
x+b_{11}y & b_{12}y & 0 & b_{14}y & b_{15}y & b_{16}y\\
b_{21}y & x+b_{22}y & 0 & b_{24}y & b_{25}y & b_{26}y\\
0 & 0 & y & b_{34}y & b_{35}y & b_{36}y
\end{array}
\right].
$$
We may now make $b_{34},b_{35},b_{36}$ vanish and assume that $M$ is generated modulo $(I:{\mathfrak m}^2)F$ by
$$
\left[
\begin{array}{rrlcccc}
x+b_{11}y & b_{12}y & 0 & b_{14}y & b_{15}y & b_{16}y\\
b_{21}y & x+b_{22}y & 0 & b_{24}y & b_{25}y & b_{26}y\\
0 & 0 & y & 0 & 0 & 0
\end{array}
\right].
$$
Finally this means that $((I:{\mathfrak m}^2),y)$ is a direct summand of $M$.
\end{proof}

Putting together all our previous results gives us the desired characterisation of ideals that occur as $I(M)$ for an indecomposable integrally closed module $M$ of rank 3.

\begin{theorem} \label{nonexistence} Let $I$ be an integrally closed ${\mathfrak m}$-primary ideal of $R$ with $ord(I) =n$. Then $I = I(M)$ for an indecomposable integrally closed module $M$ of rank $3$ iff one of the following two conditions is satisfied.
\begin{enumerate}
\item $n > 3$.
\item $n=3$ and there do not exist ${\mathfrak m}$-primary integrally closed ideals $J$ and $K$ of orders $1$ and $2$ respectively such that
$I= JK$ and both the conditions below hold:
\begin{enumerate}
\item $\lambda(\frac{R}{JK}) = \lambda(\frac{R}{J}) + \lambda(\frac{R}{K}) + 2$, and
\item ${\mathfrak m}^{2} = {\mathfrak m}J + K$,
\end{enumerate}
\end{enumerate}
\end{theorem}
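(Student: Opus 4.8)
The plan is to assemble the theorem from the existence result of Section~\ref{exissec} and the non-existence result of Section~\ref{nonexissec}, after first observing that in rank three the bookkeeping simplifies drastically. The key point is that if $3 = r_1 + r_2$ with both $r_i \geq 1$, then necessarily $\{r_1,r_2\} = \{1,2\}$; and any factorisation $I = I_1 I_2$ of an $\mathfrak{m}$-primary ideal of order $3$ into proper integrally closed ideals has $ord(I_1) + ord(I_2) = 3$ with both orders positive, hence the orders are $1$ and $2$ in some order. Consequently condition (2) of the present theorem is \emph{verbatim} condition (2) of Theorem~\ref{existence} specialised to $r = 3$: putting $J = I_1$ of order $1$ and $K = I_2$ of order $2$, one has $\mathfrak{m}^{r_1-1} = R$ and $\mathfrak{m}^{r_2-1} = \mathfrak{m}$, so $\mathfrak{m}^{r-1} = \mathfrak{m}^{r_2-1}I_1 + \mathfrak{m}^{r_1-1}I_2$ reads $\mathfrak{m}^2 = \mathfrak{m}J + K$, and the length equalities match on the nose.

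For the ``if'' direction, suppose $I$ satisfies (1) or (2). By the identification just made, $I$ satisfies (1) or (2) of Theorem~\ref{existence} with $r = 3$, so that theorem produces an indecomposable integrally closed module $M$ of rank $3$ with $I(M) = I$ (concretely, one may take $M = M_3(I)$, which is extremal by Proposition~\ref{isextr}, has $I(M) = I$ by Proposition~\ref{fittingideals}, and is indecomposable by Proposition~\ref{tight}). For the ``only if'' direction, suppose $I = I(M)$ for an indecomposable integrally closed module $M$ of rank $3$. Since $M$ is indecomposable of rank $\geq 2$ it has no free direct summand, hence $M \subseteq \mathfrak{m}F$ and $n = ord(I) \geq 3$; in particular the case $n < 3$ cannot occur. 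If $n > 3$ we are in case (1) and are done. If $n = 3$, I claim case (2) holds: if not, there would exist integrally closed $\mathfrak{m}$-primary ideals $J,K$ of orders $1$ and $2$ with $I = JK$ satisfying (a) and (b), which are exactly the two hypotheses of Proposition~\ref{nonexis2}; applying that proposition to our $M$ (integrally closed of rank $3$, no free summand, $I(M) = JK$) forces $M$ to be decomposable, a contradiction. Hence (2) holds, completing this direction. (As a sanity check, $I = \mathfrak{m}^3$, which is excluded by Lemma~\ref{mtother}, indeed fails (2) via $J = \mathfrak{m}$, $K = \mathfrak{m}^2$, where $\lambda(R/\mathfrak{m}^3) - \lambda(R/\mathfrak{m}) - \lambda(R/\mathfrak{m}^2) = 2$ and $\mathfrak{m}^2 = \mathfrak{m}\cdot\mathfrak{m} + \mathfrak{m}^2$.)

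There is no real obstacle at the level of this final statement: all the mathematical content lives upstream, in Theorem~\ref{existence} (resting on the extremal-module theory of Section~\ref{exissec}, in particular Propositions~\ref{isextr}, \ref{extrcharac} and~\ref{tight}) and in Proposition~\ref{nonexis2} (resting on Proposition~\ref{jtred}, Lemmas~\ref{msummand} and~\ref{contr}, and Corollary~\ref{splcase1}). The only thing that needs a moment's care when writing this proof is verifying that the general two-factor condition of Theorem~\ref{existence}(2) collapses precisely to the orders-$(1,2)$ condition stated here, which is immediate once one notes that orders of proper ideals are positive and must sum to $3$, leaving no other partition.
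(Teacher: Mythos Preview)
Your proposal is correct and follows essentially the same route as the paper: the ``if'' direction invokes Theorem~\ref{existence} (with $M = M_3(I)$), and the ``only if'' direction handles $n \leq 2$ trivially and the $n=3$ case via Proposition~\ref{nonexis2}. Your explicit check that condition~(2) here is exactly the $r=3$ specialisation of Theorem~\ref{existence}(2) (because the only partition of $3$ into positive parts is $1+2$) is a helpful clarification that the paper leaves implicit.
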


\begin{proof} If either of the conditions (1) or (2) hold, then by Theorem \ref{existence}, $I = I(M)$ for an indecomposable integrally closed module $M$ of rank 3 (where, in fact, $M$ can be taken to be $M_3(I)$). On the other hand if neither (1) nor (2) holds, then either $n \leq 2$ in which case $I$ is certainly not $I(M)$ for an indecomposable integrally closed module $M$ of rank 3, or $n=3$ and $I=JK$ with $ord(J)=1$, $ord(K) = 2$ and both conditions (2)(a) and (2)(b) holding, in which case, by Proposition \ref{nonexis2}, $I$ is not $I(M)$ for an indecomposable integrally closed module $M$ of rank 3.
\end{proof}

We conclude with a couple of examples illustrating our main results.

\begin{example}
{\rm 
Let $I=(x^m, y)(x,y^p)(x,y^q)$ where $m, p, q \geq 1$. Then the ideals $J=(x^m,y)$ and $K=(x,y^p)(x,y^q)$ satisfy $I=JK$ and 
the two conditions (2)(a) and (2)(b) in Theorem \ref{nonexistence}. Hence, $I$ is not $I(M)$ for an indecomposable integrally closed module $M$ of rank $3$. 
}
\end{example}

\begin{example}
{\rm 
Let $I=(x^m,y)\overline{(x^{p+1},y^{p})}$ where $m, p \geq 2$. If $p \geq 3$, then $ord(I) >3$ and so $I=I(M)$ for an indecomposable integrally closed module $M$ of rank $3$. 
Suppose $p=2$. 
If $J$ and $K$ are $\fkm$-primary integrally closed ideals of orders $1$ and $2$ 
respectively such that $I=JK$, then, by Zariski's unique factorisation theorem, necessarily 
$J=(x^m,y)$ and $K=\overline{(x^3,y^2)}$. Hence, $\lambda(R/I)=m+8>m+7=\lambda(R/J)+\lambda(R/K)+2$. 
Therefore, $I=I(M)$ for an indecomposable integrally closed module $M$ of rank $3$.
}
\end{example}

\begin{example}
{\rm 
Let $I=(x^m,y)\overline{(x^2,y^{3})}$ where $m \geq 2$.  
If we take $J=(x^m,y)$ and $K=\overline{(x^2,y^3)}$, then the ideals $J$ and $K$ satisfy the two conditions (2)(a) and (2)(b) in Theorem \ref{nonexistence}. Hence, $I=(x^m,y)\overline{(x^2,y^3)}$ is not $I(M)$ for an indecomposable integrally closed module $M$ of rank $3$. 
}
\end{example}

\section*{Acknowledgements}
The authors would like to thank the referee for a thorough reading and detailed suggestions. 
The first named author was partially supported by JSPS KAKENHI Grant Number JP20K03535.

\end{document}